\documentclass{article}

\usepackage{arxiv}

\usepackage[utf8]{inputenc} 
\usepackage[T1]{fontenc}    
\usepackage{hyperref}       
\usepackage{url}            
\usepackage{booktabs}       
\usepackage{amsfonts}       
\usepackage{nicefrac}       
\usepackage{microtype}      
\usepackage{lipsum}
\usepackage{amsmath}
\usepackage{amssymb}
\usepackage{amsthm}
\usepackage{stackengine} 
\stackMath

\setlength{\parindent}{2em}

\newtheorem{theorem}{Theorem}

\newtheorem{corollary}{Corollary}

\newcommand{\bmat}[1]{\begin{bmatrix}#1\end{bmatrix}}

\title{A theory of meta-factorization}

\author{
  Micha\l{} P.~Karpowicz \\
  Warsaw University of Technology\\
  and\\
  NASK National Research Institute\\
  \texttt{michal.karpowicz@pw.edu.pl}
}

\begin{document}

\maketitle

\begin{abstract}

We introduce meta-factorization, a theory that describes matrix decompositions as solutions of linear matrix equations: the projector and the reconstruction equation. Meta-factorization reconstructs known factorizations, reveals their internal structures, and allows for introducing modifications, as illustrated with SVD, QR, and UTV factorizations. The prospect of meta-factorization also provides insights into computational aspects of generalized matrix inverses and randomized linear algebra algorithms. The relations between the Moore-Penrose pseudoinverse, generalized Nystr\"{o}m method, the CUR decomposition, and outer-product decomposition are revealed as an illustration. Finally, meta-factorization offers hints on the structure of new factorizations and provides the potential of creating them.

\end{abstract}


{\footnotesize{\keywords{Matrix factorization; Generalized inverses; Low-rank matrix approximation; Randomized algorithms}}}

\section{Introduction}
\label{section:Introduction}

Matrix factorization decomposes a matrix into a product of other matrices to reveal essential properties of the original matrix. We introduce meta-factorization, a procedure that further factorizes the factors to reveal details of the original decomposition and control its internal structure. The main asset of meta-factorization is the projector equation, which reveals hidden projectors of matrix decomposition and describes the mutual interactions of the factors needed to reconstruct the original matrix. The reconstruction equation then governs the reconstruction.

Identifying the hidden projectors is one of the key benefits of meta-factorization. Once their internal structure is revealed, the projector equation keeps the reconstruction abilities of the projectors intact while allowing for modifications to be introduced when necessary. The original matrix is safely mapped onto its column and row space. As a result, meta-factorization develops a nontrivial perspective in which matrix factorizations emerge as solutions to linear matrix equations. 

This prospect becomes useful when studying matrix decompositions and designing matrix algorithms, including randomized linear algebra and low-rank approximation algorithms. By following the rules of meta-factorization, we can replace selected factors of a given decomposition with their numerically attractive equivalents, which control the performance and stability of the design. Finally, meta-factorization provides hints on the structure of new factorizations and offers the potential to create them.

\subsection{Related work}

Matrix factorizations play a critical role in mathematics, science, and engineering. They reveal essential matrix properties, such as spectrum, rank, or fundamental subspaces, which in turn characterize solutions to many fundamental problems in physics, signal processing, control engineering, artificial intelligence, and data science, see, e.g., Strang \cite{strang2019learning,strangCSE}. Matrix factorizations also correspond to matrix algorithms, which is clearly illustrated by Golub and Van Loan \cite{golub2013matrix}, Stewart \cite{stewart1998matrix}, Demmel \cite{demmel1997applied}, or Trefethen \cite{townsend2015continuous,trefethen1997numerical}. Moreover, Edelman and Jeong \cite{edelman2021fifty} show that there are still new families of matrix factorizations to be analyzed together with their unifying algebraic structure. The search for new matrix factorizations has been fueling linear algebra for years. 

From a mathematical point of view, meta-factorization is related to the theory of generalized matrix inverses. The equations of meta-factorization lead to solutions involving pseudoinverses or define them in special cases. For the related fundamental results, see Penrose \cite{penrose1955generalized} and Langenhop \cite{langenhop1967generalized}, Matsaglia and Styan \cite{matsaglia1974equalities} and Ben-Israel and Greville \cite{ben2003generalized} for the comprehensive exposition. Meta-factorization refers to computational aspects of generalized inverses as well. In particular, it describes explicit formulas for the Moore-Penrose pseudoinverse. Such formulas, critical for the accuracy and stability of computations, have been summarized by Ben-Israel and Greville in \cite{ben2003generalized}. See also James \cite{james1978generalised} for an introduction, and Strang and Drucker \cite{strangdrucker} for a fresh perspective. One more relation should be mentioned when a big picture of matrix factorizations is considered. Namely, there is a connection between the reconstruction equation of meta-factorization and the quadratic matrix equation generating the Lie groups. The latter, studied by Edelman and Jeong in \cite{edelman2021fifty}, gives rise to families of matrix factorizations. Under the appropriate assumptions, it can be interpreted as the meta-factorization equation. 

The results presented in this paper were initially inspired by the work of Sorensen and Embree \cite{sorensen2016deim}, with their technique for deriving the column and row space projectors for the CUR decomposition. This idea in the form of a projector equation is included in the process of meta-factorization. Also, as we will see, the generalized form of the projector equation may pave the way to possible new factorizations. 

Designing projectors has been of great interest in the field of randomized linear algebra, which is visible in the works of Halko et al. \cite{halko2011finding}, Mahoney \cite{mahoney2016lecture}, Martinsson and Tropp \cite{martinsson2019randomized,martinsson2020randomized}, Shabat et al. \cite{shabat2018randomized}, and Kannan and Vempala \cite{kannan2017randomized}. Recently, Nakatsukasa \cite{nakatsukasa2020fast} showed that many algorithms of randomized linear algebra could be written in the form of the generalized Nystr\"{o}m's method, a two-sided projection defined by sampling (sketch) matrices. Meta-factorization gives rise to the generalized Nystr\"{o}m's method. It also explains the structure of the CUR decomposition studied by Mahoney and Drineas in \cite{mahoney2009cur}, Wang and Zhang \cite{wang2013improving}, Sorensen and Embree \cite{sorensen2016deim}, Hamm and Huang \cite{hamm2020perspectives}, and predicted by Penrose in \cite{penrose1956best}. The outer-product decomposition resulting from the the rank-reduction process introduced by Wedderburn \cite{wedderburn1934lectures}, Egerv{\'a}ry \cite{egervary1960rank}, Householder \cite{householder1965theory}, Cline and Funderlic \cite{cline1979rank}, and Chu et al. \cite{chu1995rank}, is also described by the meta-factorization equations. Finally, Nakatsukasa and Tropp in \cite{nakatsukasa2021accurate} show remarkable results of redesigning projectors in GMRES and Rayleigh-Ritz methods. In particular, they predict the benefits of moving into non-orthogonal linear algebra, a move meta-factorization fully supports.

\subsection{Outline}

The paper is divided into four sections. Section~\ref{section:Meta-factorization} introduces the concept of meta-factorization. First, it demonstrates how matrix factors interact to reconstruct the original matrix. Next, it describes the nature of those interactions in the form of a projection equation and defines the equations of meta-factorizations. Finally, it shows that solutions of the equations define matrix factorizations. Section~\ref{section:How...} shows how meta-factorization works. The theory is applied to reconstruct the singular value decomposition (SVD), examine the internal structure of column-pivoted QR decomposition (CPQR), and develop a~family of UTV factorizations. Section~\ref{section:Additional...} discusses additional benefits of meta-factorization. It presents a relationship of meta-factorization with the theory of generalized matrix inverses, reconstructs the generalized Nystr\"{o}m's method for computing low-rank matrix approximations, and shows its relationship with the CUR decomposition\footnote{A Matlab code supplement is available here: \url{https://github.com/mkarpowi/mft}}. Finally, further research directions are proposed in the Appendix~\ref{section:Side note...}.

\paragraph{Notation.} Capital bold letters denote matrices, $\mathbf{W}^*$ denotes the conjugate transpose of $\mathbf{W}$, and $\mathbf{W}^+$ denotes the (Moore-Penrose inverse) pseudoinverse of $\mathbf{W}$. The $k\times k$ identity matrix is denoted as $\mathbf{I}_k$. Given ordered subindex sets $I$ and $J$, $\mathbf{A}(I,J)$ denotes the submatrix of $\mathbf{A}$ containing rows and columns of $\mathbf{A}$ indexed by $I$ and $J$, with $\mathbf{A}(:,J)$ extracting the columns of $\mathbf{A}$ indexed by $J$. With $k$ being a positive integer, $1{:}k$ denotes the ordered set $(1,\dots,k)$. Finally, $\mathcal{C}(\mathbf{A})$ denotes the column space and $\mathcal{C}(\mathbf{A}^*)$ denotes the row space for $\mathbf{A}$. An orthonormal $m\times n$ matrix $\mathbf{U}$ has orthonormal columns: $\mathbf{U}^{*}\mathbf{U} = \mathbf{I}_n$. A unitary matrix is a square orthonormal matrix.

\section{Meta-factorization}
\label{section:Meta-factorization}

This section introduces the theoretical foundations of the proposed concept of meta-factorization. First, by investigating the internal structure of certain matrix factorizations, we demonstrate how the factors interact with each other to reconstruct the original matrix. Next, we derive the conditions describing the nature of those interactions. Finally, the concept of meta-factorization is formulated. 

\subsection{Factorizations and hidden projections}

The matrix factorizations we study in this paper are of the following form:
\begin{align}\label{eq:genfactorization}
\setstackgap{L}{15pt}\def\stacktype{L}\def\sz{\scriptstyle}
\begin{aligned}
\stackunder{\mathbf{A}}{\sz (m\times n)} &= 
\stackunder{\mathbf{F}}{\sz (m\times k)} &&
\stackunder{\mathbf{G}}{\sz (k\times k)} &&
\stackunder{\mathbf{H}^*,}{\sz (k\times n)}
\end{aligned}
\end{align}
where an arbitrary $m$ by $n$ matrix $\mathbf{A}$ is decomposed here into the product of an $m$ by $k$ matrix $\mathbf{F}$, a square $k$ by $k$ matrix $\mathbf{G}$, and an $n$ by $k$ matrix $\mathbf{H}$. These factors may reveal essential matrix properties, such as spectrum, rank, or four fundamental subspaces. The most important factorizations, such as the singular value decomposition (SVD), the column-pivoted QR (CPQR) decomposition, the eigenvalue decomposition (ED), the LU decomposition, or the CUR decomposition, are all described by this pattern.

Matrix $\mathbf{F}$ defines a basis for the column space, whereas $\mathbf{H}$ defines a basis for the row space of $\mathbf{A}$. Those two bases can be derived directly from the columns and rows of the original matrix, for example as a result of (random) sampling, elimination, or orthogonalizing transformations. Matrix $\mathbf{G}$ turns out to be a more challenging one. It encodes the essential properties of $\mathbf{A}$ isolated by $\mathbf{F}$ and $\mathbf{H}$ in a way that guarantees their reconstruction in the original matrix. 

The key question is how to determine the mixing matrix $\mathbf{G}$ when $\mathbf{F}$ and $\mathbf{H}$ are given? As mentioned, the best way to learn more about a given matrix is to factor it into a particular mixture of other matrices. Let us apply the same trick here: let us factor the mixing matrix $\mathbf{G}$.

A straightforward way to calculate $\mathbf{G}$ as a product of other matrices is to invert (\ref{eq:genfactorization}):
\begin{align}\label{eq:Gfactorization}
\mathbf{G} = \mathbf{F}^{+} \mathbf{A} (\mathbf{H}^*)^+.
\end{align}
However, this operation does not reveal everything there is to reveal. Fortunately, there is a more promising way. Instead of inverting $\mathbf{F}$ and $\mathbf{H}$, we can introduce new input and output transformations, $\mathbf{Y}$ and $\mathbf{X}$, and assume that
%
\begin{align}\label{eq:geniotransform}
\setstackgap{L}{15pt}\def\stacktype{L}\def\sz{\scriptstyle}
\begin{aligned}
\stackunder{\mathbf{G}}{\sz (k\times k)} &=  
\stackunder{\mathbf{Y}^*}{\sz (k\times m)}&& 
\stackunder{\mathbf{A}}{\sz (m\times n)} &&
\stackunder{\mathbf{X}.}{\sz (n\times k)}
\end{aligned}
\end{align}
The key insight follows from substituting (\ref{eq:geniotransform}) into (\ref{eq:genfactorization}):
\begin{align}\label{eq:motivation}
\mathbf{A} =
\mathbf{F} \mathbf{Y}^* 
\mathbf{A} 
\mathbf{X} \mathbf{H}^*.
\end{align}
The original decomposition (\ref{eq:genfactorization}) is a reconstruction formula in disguise. Matrix factorization reconstructs the original matrix by projecting it onto its own column space and row space. For that to be possible it defines two projectors, the column space projector $\mathbf{F} \mathbf{Y}^*$ and the row space projector $\mathbf{X} \mathbf{H}^*$. Those projectors are determined by $\mathbf{F}$ and $\mathbf{H}$, but also by $\mathbf{Y}$ and $\mathbf{X}$, the factors encoding the properties of $\mathbf{A}$ in $\mathbf{G}$. Therefore, to calculate the mixing matrix when $\mathbf{F}$ and $\mathbf{H}$ are known, it is necessary to find $\mathbf{Y}$ and $\mathbf{X}$ that define projectors for the column space and the row space of $\mathbf{A}$. That is the idea behind meta-factorization. 

\subsection{The projector equation}

As we have seen, matrix factorization reconstructs the original matrix by projecting it onto its own column and row space. Matrix decomposition given by (\ref{eq:genfactorization}) is a reconstruction formula (\ref{eq:motivation}) in disguise. The idea behind meta-factorization is to control (or shape) the internal structure of that reconstruction formula. This, however, can only be accomplished provided that the properties of the projection matrices remain intact. We formulate that constraint in the form of a projector equation.

Suppose that $\mathbf{F}$ and $\mathbf{H}$ are arbitrary matrices. The projector equation defines $\mathbf{Y}$ and $\mathbf{X}$ such that 
\begin{align}
\mathbf{P} = \mathbf{F}\mathbf{Y}^* \quad\text{and}\quad \mathbf{R} = \mathbf{X}\mathbf{H}^*
\end{align}
are idempotent matrices,~i.e., $\mathbf{P}^2=\mathbf{P}$ and $\mathbf{R}^2=\mathbf{R}$. 

\begin{theorem}\label{Theorem:projection}
If\ \ $\mathbf{F}\mathbf{Y}^*$ and $\mathbf{X}\mathbf{H}^*$ are idempotent rank-$k$ matrices, then they satisfy the projector equation,
\begin{align}\label{eq:projector-eq}
\mathbf{Y}^*\mathbf{F} = \mathbf{H}^*\mathbf{X} = \mathbf{I}_{k}.
\end{align}
Conversely, if the the projector equation holds, then $\mathbf{F}\mathbf{Y}^*$ and $\mathbf{X}\mathbf{H}^*$ are idempotent rank-$k$ matrices.

\end{theorem}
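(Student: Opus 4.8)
The plan is to prove the two directions separately, using the rank hypotheses to control the sizes of the factors. Note first that $\mathbf{F}$ is $m\times k$ and $\mathbf{Y}^*$ is $k\times m$, so $\mathbf{F}\mathbf{Y}^*$ is $m\times m$; if it has rank $k$, then since $\operatorname{rank}(\mathbf{F}\mathbf{Y}^*)\le\min(\operatorname{rank}\mathbf{F},\operatorname{rank}\mathbf{Y})\le k$, both $\mathbf{F}$ and $\mathbf{Y}$ must have full column rank $k$. The same reasoning applies to $\mathbf{X}\mathbf{H}^*$, forcing $\mathbf{X}$ and $\mathbf{H}$ to have full column rank $k$. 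This observation is what makes the small $k\times k$ products $\mathbf{Y}^*\mathbf{F}$ and $\mathbf{H}^*\mathbf{X}$ tractable.

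For the forward direction, I would start from $\mathbf{P}^2=\mathbf{P}$, i.e. $\mathbf{F}(\mathbf{Y}^*\mathbf{F})\mathbf{Y}^*=\mathbf{F}\mathbf{Y}^*$, and rewrite it as $\mathbf{F}\bigl((\mathbf{Y}^*\mathbf{F})-\mathbf{I}_k\bigr)\mathbf{Y}^*=\mathbf{0}$. Since $\mathbf{F}$ has full column rank it has a left inverse (e.g. $\mathbf{F}^{+}$ with $\mathbf{F}^{+}\mathbf{F}=\mathbf{I}_k$), and $\mathbf{Y}^*$ has full row rank so it has a right inverse; multiplying on the left by $\mathbf{F}^{+}$ and on the right by $(\mathbf{Y}^*)^{+}$ cancels the outer factors and yields $\mathbf{Y}^*\mathbf{F}=\mathbf{I}_k$. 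The identical manipulation applied to $\mathbf{R}^2=\mathbf{R}$, namely $\mathbf{X}\bigl((\mathbf{H}^*\mathbf{X})-\mathbf{I}_k\bigr)\mathbf{H}^*=\mathbf{0}$ with $\mathbf{X}$ of full column rank and $\mathbf{H}^*$ of full row rank, gives $\mathbf{H}^*\mathbf{X}=\mathbf{I}_k$. Together these are exactly equation (\ref{eq:projector-eq}).

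For the converse, assume $\mathbf{Y}^*\mathbf{F}=\mathbf{I}_k$ and $\mathbf{H}^*\mathbf{X}=\mathbf{I}_k$. Then $(\mathbf{F}\mathbf{Y}^*)^2=\mathbf{F}(\mathbf{Y}^*\mathbf{F})\mathbf{Y}^*=\mathbf{F}\mathbf{I}_k\mathbf{Y}^*=\mathbf{F}\mathbf{Y}^*$, so $\mathbf{P}$ is idempotent, and likewise $(\mathbf{X}\mathbf{H}^*)^2=\mathbf{X}(\mathbf{H}^*\mathbf{X})\mathbf{H}^*=\mathbf{X}\mathbf{H}^*$, so $\mathbf{R}$ is idempotent. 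For the rank claim: $\mathbf{Y}^*\mathbf{F}=\mathbf{I}_k$ forces $\operatorname{rank}(\mathbf{F}\mathbf{Y}^*)\ge\operatorname{rank}(\mathbf{Y}^*\mathbf{F}\mathbf{Y}^*)=\operatorname{rank}(\mathbf{I}_k\mathbf{Y}^*)=\operatorname{rank}(\mathbf{Y}^*)=k$ — more directly, $\operatorname{rank}(\mathbf{P})=\operatorname{trace}(\mathbf{P})$ for an idempotent matrix and $\operatorname{trace}(\mathbf{F}\mathbf{Y}^*)=\operatorname{trace}(\mathbf{Y}^*\mathbf{F})=\operatorname{trace}(\mathbf{I}_k)=k$ — while $\operatorname{rank}(\mathbf{F}\mathbf{Y}^*)\le k$ is automatic from the dimensions. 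Hence $\mathbf{P}$, and by the same trace argument $\mathbf{R}$, are idempotent of rank exactly $k$.

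The only subtlety, and the step I would be most careful about, is the cancellation of the outer factors in the forward direction: one must invoke full column/row rank of the individual factors (justified by the rank-$k$ hypothesis on the product) to legitimately cancel $\mathbf{F}$ on the left and $\mathbf{Y}^*$ on the right, since neither is square or invertible. Everything else is a short algebraic identity, with the trace–rank fact for idempotents doing the bookkeeping on ranks in the converse.
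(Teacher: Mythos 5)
Your proof is correct and follows essentially the same route as the paper's. For the forward direction you factor $\mathbf{P}^2=\mathbf{P}$ as $\mathbf{F}(\mathbf{Y}^*\mathbf{F}-\mathbf{I}_k)\mathbf{Y}^*=\mathbf{0}$, argue that the rank-$k$ hypothesis on $\mathbf{F}\mathbf{Y}^*$ forces $\mathbf{F}$ to have full column rank and $\mathbf{Y}^*$ full row rank, and then cancel the outer factors — this is exactly the paper's argument, only made slightly more explicit by invoking the one-sided pseudoinverses $\mathbf{F}^+$ and $(\mathbf{Y}^*)^+$ rather than reasoning by contradiction. For the converse, the idempotence computation is identical; your use of the trace--rank identity $\operatorname{rank}(\mathbf{P})=\operatorname{trace}(\mathbf{P})$ for idempotents, combined with cyclicity of trace, is a cleaner way to pin down $\operatorname{rank}(\mathbf{P})=k$ than the paper's chain of rank inequalities, but it proves the same thing by a standard shortcut rather than by a genuinely different method. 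One cosmetic point: when you say $\operatorname{rank}(\mathbf{F}\mathbf{Y}^*)\le k$ "is automatic from the dimensions," you should attribute it to the $m\times k$ and $k\times m$ shapes of the factors (so each has rank at most $k$), not to the dimensions of the $m\times m$ product itself.
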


\begin{proof}
Consider $\mathbf{P} = \mathbf{F}\mathbf{Y}^*$ and $\mathbf{R} = \mathbf{X}\mathbf{H}^*$. If (\ref{eq:projector-eq}) holds, then $\mathbf{P}^2 = \mathbf{F}\mathbf{Y}^*\mathbf{F}\mathbf{Y}^* = \mathbf{F}\mathbf{Y}^* = \mathbf{P}$. By assumption,  $\mathrm{rank}(\mathbf{F}) \le k$, so we have
\begin{align}
\mathrm{rank}(\mathbf{F}\mathbf{Y}^*) \le 
\mathrm{rank}(\mathbf{F}) \le k = 
\mathrm{rank}(\mathbf{Y}^*\mathbf{F}) \le  
\mathrm{rank}(\mathbf{F}) = 
\mathrm{rank}(\mathbf{F}\mathbf{Y}^*\mathbf{F}) \le
\mathrm{rank}(\mathbf{F}\mathbf{Y}^*),
\end{align}
so $\mathrm{rank}(\mathbf{P}) = k$. Similarly, $\mathbf{R}^2 = \mathbf{X}\mathbf{H}^*\mathbf{X}\mathbf{H}^* = \mathbf{X}\mathbf{H}^* = \mathbf{R}$, and $\mathrm{rank}(\mathbf{R}) = k$, by the same arguments. 

Conversely, if $\mathbf{F}\mathbf{Y}^*$ is idempotent rank-$k$ matrix, then
\begin{align}
\mathbf{F}(\mathbf{Y}^*\mathbf{F}-\mathbf{I}_k)\mathbf{Y}^* = \mathbf{0}
\end{align}
and $\mathrm{rank}(\mathbf{F}) = \mathrm{rank}(\mathbf{Y}^*) = k$. Indeed, suppose $\mathbf{F}$ has linearly dependent columns and the equation holds for a nonzero matrix $(\mathbf{Y}^*\mathbf{F}-\mathbf{I}_k)\mathbf{Y}^*$. But then $\mathbf{F}\mathbf{Y}^*$ is not a rank-$k$ matrix, which contradicts the assumption. The same argument holds for $\mathbf{Y}^*$. Therefore, we conclude that (\ref{eq:projector-eq}) holds.

Similarly, if $\mathbf{X}\mathbf{H}^*$ is idempotent rank-$k$, then
\begin{align}
\mathbf{X}(\mathbf{H}^*\mathbf{X}-\mathbf{I}_k)\mathbf{H}^* = \mathbf{0}.
\end{align}
Since $\mathrm{rank}(\mathbf{X}) = \mathrm{rank}(\mathbf{H}^*) = k$, we conclude that (\ref{eq:projector-eq}) holds. 
\end{proof}

We shall call equation (\ref{eq:projector-eq}) the projector equation. The equation is fundamental and paves the way to our meta-factorization results. It connects $\mathbf{Y}^*$ with $\mathbf{F}$ and $\mathbf{X}$ with $\mathbf{H}^*$ to make them projectors, which is a key step of the meta-factorization procedure. 

The equation has been often used in linear algebra. For example, it appears in the already mentioned works of Langenhop \cite{langenhop1967generalized}, Ben-Israel and Greville \cite{ben2003generalized} on the generalized inverses of matrices, and in the recent work of Sorensen and Embree \cite{sorensen2016deim} on the CUR decomposition. However, it seems that its role has not been extensively explored in the context being considered. The projector equation can also be generalized, in which case it may give rise to new factorizations, as predicted in Appendix~\ref{section:Side note...}.

To appreciate the role of the projector equation, we must characterize its solutions. 

\begin{theorem}\label{Theorem:projector-eq-sol}
The projector equation (\ref{eq:projector-eq}) admits solutions:
\begin{align}\label{eq:YX-projector}
\mathbf{Y}^* = (\mathbf{B}^*\mathbf{F})^+\mathbf{B}^*
\quad\text{and}\quad
\mathbf{X} = \mathbf{D}(\mathbf{H}^*\mathbf{D})^+
\end{align}
for any $\mathbf{B}$ and $\mathbf{D}$ such that: 
\begin{align}
\mathrm{rank}(\mathbf{B}^*\mathbf{F}) = \mathrm{rank}(\mathbf{H}^*\mathbf{D}) = k = \min\{m,n\}.
\end{align}
Furthermore, $\mathbf{Y}^*$ and $\mathbf{X}$ satisfy the generalized matrix inverse equations:
\begin{align}\label{eq:YX-PenroseEQs}
\mathbf{F}\mathbf{Y}^*\mathbf{F} = \mathbf{F}
\quad\text{and}\quad
\mathbf{H}^*\mathbf{X}\mathbf{H}^* = \mathbf{H}^*.
\end{align}
Finally, when $\mathbf{B}=\mathbf{F}$ and $\mathbf{D} = \mathbf{H}$, then $\mathbf{Y}^* = \mathbf{F}^+$ and $\mathbf{X} = (\mathbf{H}^*)^+$.


\end{theorem}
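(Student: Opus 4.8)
The plan is to verify the displayed formulas (\ref{eq:YX-projector}) directly; the whole argument rests on two standard facts about the Moore--Penrose pseudoinverse, which I would either quote from Ben-Israel and Greville \cite{ben2003generalized} or check in one line from the four Penrose conditions: if a matrix $\mathbf{M}$ has full column rank then $\mathbf{M}^{+} = (\mathbf{M}^{*}\mathbf{M})^{-1}\mathbf{M}^{*}$, whence $\mathbf{M}^{+}\mathbf{M} = \mathbf{I}$; dually, if $\mathbf{N}$ has full row rank then $\mathbf{N}^{+} = \mathbf{N}^{*}(\mathbf{N}\mathbf{N}^{*})^{-1}$, whence $\mathbf{N}\mathbf{N}^{+} = \mathbf{I}$.

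First I would unpack the rank hypotheses. The product $\mathbf{B}^{*}\mathbf{F}$ has exactly $k$ columns (inherited from $\mathbf{F}$), so $\mathrm{rank}(\mathbf{B}^{*}\mathbf{F}) = k$ says precisely that $\mathbf{B}^{*}\mathbf{F}$ has full column rank; symmetrically, $\mathbf{H}^{*}\mathbf{D}$ has exactly $k$ rows, so $\mathrm{rank}(\mathbf{H}^{*}\mathbf{D}) = k$ says $\mathbf{H}^{*}\mathbf{D}$ has full row rank. Moreover $k = \mathrm{rank}(\mathbf{B}^{*}\mathbf{F}) \le \mathrm{rank}(\mathbf{F}) \le k$ forces $\mathbf{F}$ itself to have full column rank $k$, and likewise $\mathbf{H}$ (equivalently, $\mathbf{H}^{*}$ has full row rank).

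Next I would substitute and compute. For $\mathbf{Y}^{*} = (\mathbf{B}^{*}\mathbf{F})^{+}\mathbf{B}^{*}$ we get $\mathbf{Y}^{*}\mathbf{F} = (\mathbf{B}^{*}\mathbf{F})^{+}(\mathbf{B}^{*}\mathbf{F}) = \mathbf{I}_{k}$ by the full-column-rank identity applied to $\mathbf{B}^{*}\mathbf{F}$; for $\mathbf{X} = \mathbf{D}(\mathbf{H}^{*}\mathbf{D})^{+}$ we get $\mathbf{H}^{*}\mathbf{X} = (\mathbf{H}^{*}\mathbf{D})(\mathbf{H}^{*}\mathbf{D})^{+} = \mathbf{I}_{k}$ by the full-row-rank identity applied to $\mathbf{H}^{*}\mathbf{D}$. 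This shows that the pair solves the projector equation (\ref{eq:projector-eq}). The generalized-inverse equations (\ref{eq:YX-PenroseEQs}) then follow with no extra work, since $\mathbf{F}\mathbf{Y}^{*}\mathbf{F} = \mathbf{F}(\mathbf{Y}^{*}\mathbf{F}) = \mathbf{F}\,\mathbf{I}_{k} = \mathbf{F}$ and $\mathbf{H}^{*}\mathbf{X}\mathbf{H}^{*} = (\mathbf{H}^{*}\mathbf{X})\mathbf{H}^{*} = \mathbf{I}_{k}\,\mathbf{H}^{*} = \mathbf{H}^{*}$.

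Finally, for the special case $\mathbf{B} = \mathbf{F}$ and $\mathbf{D} = \mathbf{H}$, I would observe that $\mathbf{F}$ having full column rank makes $\mathbf{F}^{*}\mathbf{F}$ an invertible $k \times k$ matrix, so $(\mathbf{F}^{*}\mathbf{F})^{+} = (\mathbf{F}^{*}\mathbf{F})^{-1}$ and $\mathbf{Y}^{*} = (\mathbf{F}^{*}\mathbf{F})^{-1}\mathbf{F}^{*}$, which is exactly the full-column-rank formula for $\mathbf{F}^{+}$; dually $\mathbf{H}^{*}\mathbf{H}$ is invertible and $\mathbf{X} = \mathbf{H}(\mathbf{H}^{*}\mathbf{H})^{-1}$, which is the full-row-rank formula for $(\mathbf{H}^{*})^{+}$. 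I do not expect a genuine obstacle in this theorem: the only points needing care are tracking the matrix shapes so that each product is matched to the correct one-sided pseudoinverse identity, and recognising that for $\mathbf{B}^{*}\mathbf{F}$ and $\mathbf{H}^{*}\mathbf{D}$ the hypothesis ``rank $k$'' is exactly full one-sided rank, which is what licenses the cancellations $\mathbf{M}^{+}\mathbf{M} = \mathbf{I}$ and $\mathbf{N}\mathbf{N}^{+} = \mathbf{I}$.
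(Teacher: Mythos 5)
Your proof is correct, and the argument for the main assertion (substituting the candidate $\mathbf{Y}^{*}$ and $\mathbf{X}$ into the projector equation and cancelling via the one-sided pseudoinverse identities, then reading off (\ref{eq:YX-PenroseEQs})) is essentially the paper's argument, made a bit more explicit about why the cancellations $(\mathbf{B}^*\mathbf{F})^+(\mathbf{B}^*\mathbf{F}) = \mathbf{I}_k$ and $(\mathbf{H}^*\mathbf{D})(\mathbf{H}^*\mathbf{D})^+ = \mathbf{I}_k$ hold. Where you diverge is the special case $\mathbf{B}=\mathbf{F}$, $\mathbf{D}=\mathbf{H}$: you observe that the rank hypothesis forces $\mathbf{F}$ to have full column rank, so $\mathbf{F}^*\mathbf{F}$ is invertible and $(\mathbf{F}^*\mathbf{F})^+\mathbf{F}^* = (\mathbf{F}^*\mathbf{F})^{-1}\mathbf{F}^*$ is exactly the textbook full-rank formula for $\mathbf{F}^+$. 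The paper instead proves the identity $(\mathbf{F}^*\mathbf{F})^+\mathbf{F}^* = \mathbf{F}^+$ \emph{without} assuming full rank, by verifying all four Penrose conditions for $\mathbf{W} = (\mathbf{F}^*\mathbf{F})^+\mathbf{F}^*$ (using the trick $\mathbf{F}^*\mathbf{F}(\mathbf{W}\mathbf{F}-\mathbf{I}_k)=\mathbf{0}$, left-multiplying by $(\mathbf{W}\mathbf{F}-\mathbf{I}_k)^*$, and invoking $\mathbf{M}^*\mathbf{M}=\mathbf{0}\Rightarrow\mathbf{M}=\mathbf{0}$). Your route is shorter and sufficient for the theorem as stated; the paper's is longer but establishes a slightly more general fact that does not lean on the rank hypothesis and will also serve in the oblique (rank-deficient) setting of Theorem~\ref{Theorem:idempotence}.
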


\begin{proof}
Substituting (\ref{eq:YX-projector}) into (\ref{eq:projector-eq}), we obtain
\begin{align}
\mathbf{Y}^*\mathbf{F} = (\mathbf{B}^*\mathbf{F})^+\mathbf{B}^*\mathbf{F} = \mathbf{I}_k = \mathbf{H}^*\mathbf{D}(\mathbf{H}^*\mathbf{D})^+ = \mathbf{H}^*\mathbf{X}.
\end{align}
It follows that $\mathbf{Y}^*$ and $\mathbf{X}$ satisfy the generalized inverse equations (\ref{eq:YX-PenroseEQs}).

To prove that $\mathbf{Y}^* = \mathbf{F}^+$ when $\mathbf{B}=\mathbf{F}$, we show that in the general case,
\begin{align}
(\mathbf{F}^*\mathbf{F})^+\mathbf{F}^* = \mathbf{F}^+.
\end{align}
Let $\mathbf{W} = (\mathbf{F}^*\mathbf{F})^+\mathbf{F}^*$. We must verify that $\mathbf{W}$ satisfies the Penrose equations \cite{penrose1955generalized} for a pseudoinverse:
\begin{align}
\begin{aligned}
\mathbf{F}\mathbf{W}\mathbf{F} &= \mathbf{F},
\\
\mathbf{W}\mathbf{F}\mathbf{W} &= \mathbf{W},
\\
(\mathbf{F}\mathbf{W})^* &= \mathbf{F}\mathbf{W},
\\
(\mathbf{W}\mathbf{F})^* &= \mathbf{W}\mathbf{F}.
\end{aligned}    
\end{align}
First, consider $\mathbf{F}^*\mathbf{F}$ and observe that by the properties of a pseudoinverse,
\begin{align}
\mathbf{F}^*\mathbf{F} = 
\mathbf{F}^*\mathbf{F}(\mathbf{F}^*\mathbf{F})^+\mathbf{F}^*\mathbf{F} =
\mathbf{F}^*\mathbf{F}\mathbf{W}\mathbf{F},
\end{align}
which implies that
\begin{align}
\mathbf{F}^*\mathbf{F}(\mathbf{W}\mathbf{F}-\mathbf{I}_k) = \mathbf{0}.
\end{align}
Multiplying on the left by $(\mathbf{W}\mathbf{F}-\mathbf{I}_k)^*$ and using the properties of transposition gives $(\mathbf{F}\mathbf{W}\mathbf{F}-\mathbf{F})^*(\mathbf{F}\mathbf{W}\mathbf{F}-\mathbf{F}) = \mathbf{0}$. Therefore,
\begin{align}
\mathbf{F}\mathbf{W}\mathbf{F} = \mathbf{F}
\end{align}
and the first Penrose equation is satisfied.

Next, consider the equation,
\begin{align}
(\mathbf{F}^*\mathbf{F})^+\mathbf{F}^*\mathbf{F}(\mathbf{F}^*\mathbf{F})^+ = (\mathbf{F}^*\mathbf{F})^+.
\end{align}
Multiplying on the right by $\mathbf{F}^*$, we conclude that
\begin{align}
\mathbf{W}\mathbf{F}\mathbf{W} = \mathbf{W}.
\end{align}
Therefore, the second Penrose equation is satisfied, as well. 

The last two Penrose equations follow from the properties of transpositions. Namely, we have
\begin{align}
\mathbf{F}\mathbf{W}=(\mathbf{F}\mathbf{W})^* 
\quad\text{and}\quad
\mathbf{W}\mathbf{F} = (\mathbf{W}\mathbf{F})^*
\end{align}
and we conclude that
\begin{align}
\mathbf{W} = \mathbf{F}^+.
\end{align}
By similar arguments, 
\begin{align}
(\mathbf{H}^*)^+ = \mathbf{H}(\mathbf{H}^*\mathbf{H})^+
\end{align}
and the result holds with $\mathbf{D} = \mathbf{H}$. 
\end{proof}

We have demonstrated that matrices of the form given by (\ref{eq:YX-projector}) solve the projector equation and give rise to orthogonal projectors when $\mathbf{B} = \mathbf{F}$ and $\mathbf{D} = \mathbf{H}$. It remains to show that they also define oblique projectors. In other words, $\mathbf{Y}^*$ and $\mathbf{X}$ must satisfy the generalized inverse equations even when $\mathbf{B}^*\mathbf{F}$ and $\mathbf{H}^*\mathbf{D}$ are not full rank matrices.

\begin{theorem}\label{Theorem:idempotence}
Suppose that 
\begin{align}
\mathbf{Y}^* = (\mathbf{B}^*\mathbf{F})^+\mathbf{B}^*
\quad\text{and}\quad
\mathbf{X} = \mathbf{D}(\mathbf{H}^*\mathbf{D})^+,
\end{align}
where $\mathbf{B}$ and $\mathbf{D}$ satisfy condition: 
\begin{align}
\mathrm{rank}(\mathbf{B}^*\mathbf{F}) = \mathrm{rank}(\mathbf{H}^*\mathbf{D}) = k.
\end{align}
Then
\begin{align}
\mathbf{F}\mathbf{Y}^*\mathbf{F} = \mathbf{F}
\quad\text{and}\quad
\mathbf{H}^*\mathbf{X}\mathbf{H}^* = \mathbf{H}^*.
\end{align}
\end{theorem}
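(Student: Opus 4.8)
The plan is to prove the two identities $\mathbf{F}\mathbf{Y}^*\mathbf{F} = \mathbf{F}$ and $\mathbf{H}^*\mathbf{X}\mathbf{H}^* = \mathbf{H}^*$ separately, exploiting the symmetry between the two claims so that only the first needs a full argument. Substituting the definition $\mathbf{Y}^* = (\mathbf{B}^*\mathbf{F})^+\mathbf{B}^*$, the first identity becomes $\mathbf{F}(\mathbf{B}^*\mathbf{F})^+\mathbf{B}^*\mathbf{F} = \mathbf{F}$. The natural strategy is to show that the matrix $\mathbf{M} := (\mathbf{B}^*\mathbf{F})^+(\mathbf{B}^*\mathbf{F})$ acts as the identity on the column space structure of $\mathbf{F}$ from the right, i.e.\ that $\mathbf{F}\mathbf{M} = \mathbf{F}$.

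First I would record that, by the defining Penrose properties of the pseudoinverse applied to the $k\times k$ matrix $\mathbf{B}^*\mathbf{F}$, the product $\mathbf{M} = (\mathbf{B}^*\mathbf{F})^+(\mathbf{B}^*\mathbf{F})$ is a Hermitian idempotent (an orthogonal projector) whose rank equals $\mathrm{rank}(\mathbf{B}^*\mathbf{F}) = k$. Since $\mathbf{M}$ is a $k\times k$ orthogonal projector of rank $k$, it must equal $\mathbf{I}_k$. That already gives $\mathbf{F}\mathbf{M} = \mathbf{F}$, hence $\mathbf{F}\mathbf{Y}^*\mathbf{F} = \mathbf{F}(\mathbf{B}^*\mathbf{F})^+(\mathbf{B}^*\mathbf{F}) = \mathbf{F}\mathbf{I}_k = \mathbf{F}$. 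In fact, under the stated rank hypothesis $\mathrm{rank}(\mathbf{B}^*\mathbf{F}) = k$ this is cleaner than the argument used in Theorem~\ref{Theorem:projector-eq-sol}: here $\mathbf{B}^*\mathbf{F}$ is square and of full rank $k$, so it is genuinely invertible and $(\mathbf{B}^*\mathbf{F})^+ = (\mathbf{B}^*\mathbf{F})^{-1}$, whence $(\mathbf{B}^*\mathbf{F})^+(\mathbf{B}^*\mathbf{F}) = \mathbf{I}_k$ immediately. Either route closes the first identity.

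For the second identity I would run the mirror-image argument: substituting $\mathbf{X} = \mathbf{D}(\mathbf{H}^*\mathbf{D})^+$ turns $\mathbf{H}^*\mathbf{X}\mathbf{H}^* = \mathbf{H}^*$ into $(\mathbf{H}^*\mathbf{D})(\mathbf{H}^*\mathbf{D})^+\mathbf{H}^* = \mathbf{H}^*$, and since $\mathbf{H}^*\mathbf{D}$ is $k\times k$ with $\mathrm{rank}(\mathbf{H}^*\mathbf{D}) = k$ it is invertible, so $(\mathbf{H}^*\mathbf{D})(\mathbf{H}^*\mathbf{D})^+ = \mathbf{I}_k$ and the identity follows. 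The only place one must be slightly careful is the claim "$k\times k$ and full rank $\Rightarrow$ invertible $\Rightarrow$ $A^+ = A^{-1}$"; this is standard, but I would state it explicitly since the matrices $\mathbf{B}^*\mathbf{F}$ and $\mathbf{H}^*\mathbf{D}$ are exactly $k\times k$ by the dimension bookkeeping ($\mathbf{B}$ is $m\times k$, $\mathbf{F}$ is $m\times k$, so $\mathbf{B}^*\mathbf{F}$ is $k\times k$; similarly for $\mathbf{H}^*\mathbf{D}$). There is essentially no hard obstacle here — the statement is a corollary of the first sentence of Theorem~\ref{Theorem:projector-eq-sol} combined with the observation that the relevant pseudoinverses are ordinary inverses — so the main thing is to present it crisply rather than to overcome any real difficulty; the one subtlety worth flagging is that, unlike Theorem~\ref{Theorem:projector-eq-sol}, this statement does \emph{not} require $k = \min\{m,n\}$, only $\mathrm{rank}(\mathbf{B}^*\mathbf{F}) = \mathrm{rank}(\mathbf{H}^*\mathbf{D}) = k$, so I would avoid invoking anything that secretly uses the stronger hypothesis.
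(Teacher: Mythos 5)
Your proof is correct under the paper's standing dimension conventions ($\mathbf{F}$ is $m\times k$ with full column rank, so $\mathrm{rank}(\mathbf{B}^*\mathbf{F})=k$ means $\mathbf{B}^*\mathbf{F}$ has full column rank), but it takes a genuinely different route from the paper's. You argue directly that $(\mathbf{B}^*\mathbf{F})^+(\mathbf{B}^*\mathbf{F})$ is a $k\times k$ Hermitian idempotent of rank $k$, hence $\mathbf{I}_k$, so $\mathbf{F}\mathbf{Y}^*\mathbf{F}=\mathbf{F}\cdot\mathbf{I}_k=\mathbf{F}$. The paper instead mimics the transpose-multiplication trick from Theorem~\ref{Theorem:projector-eq-sol}: from $\mathbf{B}^*\mathbf{F}(\mathbf{B}^*\mathbf{F})^+\mathbf{B}^*\mathbf{F}-\mathbf{B}^*\mathbf{F}=\mathbf{0}$ it multiplies on the left by $((\mathbf{B}^*\mathbf{F})^+\mathbf{B}^*\mathbf{F}-\mathbf{I}_k)^*$ to reach
$(\mathbf{B}(\mathbf{B}^*\mathbf{F})^+\mathbf{B}^*\mathbf{F}-\mathbf{B})^*(\mathbf{F}(\mathbf{B}^*\mathbf{F})^+\mathbf{B}^*\mathbf{F}-\mathbf{F})=\mathbf{0}$,
and then asserts the second factor must vanish ``for the equation to hold for every $\mathbf{B}$.'' Unlike the analogous step in Theorem~\ref{Theorem:projector-eq-sol}, where $\mathbf{B}=\mathbf{F}$ makes the expression a Gram product $\mathbf{X}^*\mathbf{X}=\mathbf{0}$, here $\mathbf{B}\neq\mathbf{F}$ in general, so the vanishing of the product does not by itself imply vanishing of either factor --- the paper's own argument has a gap that your direct rank argument sidesteps cleanly. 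Your proof also exposes a tension with the paper's surrounding prose, which advertises Theorem~\ref{Theorem:idempotence} as covering the case ``when $\mathbf{B}^*\mathbf{F}$ and $\mathbf{H}^*\mathbf{D}$ are not full rank matrices'': under the hypotheses as actually stated, $\mathbf{B}^*\mathbf{F}$ \emph{is} forced to have full column rank and $(\mathbf{B}^*\mathbf{F})^+\mathbf{B}^*\mathbf{F}=\mathbf{I}_k$, so the ``oblique'' regime the text gestures at never arises.

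One small correction to your dimension bookkeeping: you should not assert that $\mathbf{B}$ is $m\times k$ and hence that $\mathbf{B}^*\mathbf{F}$ is square. The theorem never constrains the number of columns of $\mathbf{B}$ or $\mathbf{D}$, and the paper deliberately exploits this freedom (e.g.\ the Nystr\"om oversampling code takes $\mathbf{B}$ with $2k$ columns, making $\mathbf{B}^*\mathbf{F}$ a tall $2k\times k$ matrix). So your secondary ``square, hence invertible, hence $A^+=A^{-1}$'' shortcut is not available in general. Your primary route, however, is unaffected: $(\mathbf{B}^*\mathbf{F})^+(\mathbf{B}^*\mathbf{F})$ is always $k\times k$ regardless of how many columns $\mathbf{B}$ has, and full column rank of $\mathbf{B}^*\mathbf{F}$ gives $(\mathbf{B}^*\mathbf{F})^+(\mathbf{B}^*\mathbf{F})=\mathbf{I}_k$ directly. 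Lead with that argument and drop the invertibility remark, or restrict it to the special case $p=k$. Your observation that the hypothesis does not require $k=\min\{m,n\}$ is correct and worth keeping.
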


\begin{proof}
It follows from the properties of pseudoinverse that:
\begin{align}
\mathbf{B}^*\mathbf{F}(\mathbf{B}^*\mathbf{F})^+\mathbf{B}^*\mathbf{F} - \mathbf{B}^*\mathbf{F} = \mathbf{0}.
\end{align}
Multiplying on the left by $((\mathbf{B}^*\mathbf{F})^+\mathbf{B}^*\mathbf{F} - \mathbf{I}_k)^*$,
yields
\begin{align}
(\mathbf{B}(\mathbf{B}^*\mathbf{F})^+\mathbf{B}^*\mathbf{F} - \mathbf{B})^*
(\mathbf{F}(\mathbf{B}^*\mathbf{F})^+\mathbf{B}^*\mathbf{F} - 
\mathbf{F}) = \mathbf{0}.
\end{align}
Notice that $\mathbf{B}^*\mathbf{F}$ is not a~full rank matrix, so we cannot expect $(\mathbf{B}^*\mathbf{F})^+\mathbf{B}^*\mathbf{F}$ to be equal to $\mathbf{I}_k$. Therefore, for the equation to hold for every $\mathbf{B}$, we must have
\begin{align}
\mathbf{F}(\mathbf{B}^*\mathbf{F})^+\mathbf{B}^*\mathbf{F} = 
\mathbf{F}\mathbf{Y}^*\mathbf{F} = 
\mathbf{F}.
\end{align}
Similarly, consider the equation
\begin{align}
\mathbf{H}^*\mathbf{D}(\mathbf{H}^*\mathbf{D})^+\mathbf{H}^*\mathbf{D} - \mathbf{H}^*\mathbf{D} = 
(\mathbf{H}^*\mathbf{D}(\mathbf{H}^*\mathbf{D})^+ - \mathbf{I}_k) \mathbf{H}^*\mathbf{D} = \mathbf{0}.
\end{align}
Multiplying on the right by $(\mathbf{H}^*\mathbf{D}(\mathbf{H}^*\mathbf{D})^+ - \mathbf{I}_k)^*$,
yields
\begin{align}
\begin{aligned}
&(\mathbf{H}^*\mathbf{D}(\mathbf{H}^*\mathbf{D})^+ - \mathbf{I}_k)\mathbf{H}^*
\mathbf{D}(\mathbf{H}^*\mathbf{D}(\mathbf{H}^*\mathbf{D})^+ - \mathbf{I}_k)^*
\\
&=(\mathbf{H}^*\mathbf{D}(\mathbf{H}^*\mathbf{D})^+\mathbf{H}^* - \mathbf{H}^*)
\mathbf{D}^{**}(\mathbf{H}^*\mathbf{D}(\mathbf{H}^*\mathbf{D})^+ - \mathbf{I}_k)^* 
\\
&=(\mathbf{H}^*\mathbf{D}(\mathbf{H}^*\mathbf{D})^+\mathbf{H}^* - \mathbf{H}^*)
(\mathbf{H}^*\mathbf{D}(\mathbf{H}^*\mathbf{D})^+\mathbf{D}^* - \mathbf{D}^*)^* 
= \mathbf{0}.
\end{aligned}
\end{align}
Again, since $\mathbf{H}^*\mathbf{D}(\mathbf{H}^*\mathbf{D})^+$ is not equal to $\mathbf{I}_k$ in the general case considered and the equation must hold for every $\mathbf{D}$, we conclude that
\begin{align}
\mathbf{H}^*\mathbf{D}(\mathbf{H}^*\mathbf{D})^+\mathbf{H}^* =
\mathbf{H}^*\mathbf{X}\mathbf{H}^* = \mathbf{H}^*.
\end{align}
\end{proof}

In short, the projector equation defines necessary and sufficient conditions for $\mathbf{F}\mathbf{Y}^*$ and $\mathbf{X}\mathbf{H}^*$ to be idempotent rank-$k$ matrices. The form of the solutions $\mathbf{Y}^*$ and $\mathbf{X}$, given by (\ref{eq:YX-projector}), can be generalized to make $\mathbf{F}\mathbf{Y}^*$ and $\mathbf{X}\mathbf{H}^*$ oblique projectors (idempotent matrices).

\subsection{The reconstruction equation and meta-factorization procedure}

We are now ready to introduce the concept of meta-factorization, a procedure for decomposing the factorization’s factors into the product of matrices of the desired structure.

When $\mathbf{F}$ and $\mathbf{H}$ are given, the projector equation defines matrices $\mathbf{Y}$ and $\mathbf{X}$ that satisfy the constraints of the matrix reconstruction problem. It keeps the properties of the projection matrices, $\mathbf{F}\mathbf{Y}^*$ and $\mathbf{X}\mathbf{H}^*$, intact. Therefore, we must also enforce the projections onto the required subspaces. Indeed, notice that if the projector equation holds, $\mathbf{A}$ is transformed into some
\begin{align}
\mathbf{M} = 
(\mathbf{F}\mathbf{Y}^*)\mathbf{A}(\mathbf{X}\mathbf{H}^*).
\end{align}
But when $\mathbf{F}$ and $\mathbf{H}$ are arbitrary, we cannot expect $\mathbf{M}$ to approximate~$\mathbf{A}$. For the reconstruction to be successful it is also necessary to demand that $\mathbf{F}$ and $\mathbf{H}$ define the column space and row space of~$\mathbf{A}$. That brings us to our main result. Theorem~\ref{Theorem:meta-factorization} introduces the idea of meta-factorization exploiting the projector equation to find and tune the desired projector matrices.

\begin{theorem}\label{Theorem:meta-factorization}
The reconstruction equation
\begin{align}\label{eq:meta-factorization-eq}
\mathbf{A} = 
\mathbf{F} \mathbf{Y}^* 
\mathbf{A} 
\mathbf{X} \mathbf{H}^*
\end{align}
holds for a rank-$k$ matrix $\mathbf{A}$, when $\mathbf{Y}^*$ and $\mathbf{X}$ solve the projector equation (\ref{eq:projector-eq}) with $\mathbf{F}$ and $\mathbf{H}$ representing bases for the column and row space of $\mathbf{A}$.
\end{theorem}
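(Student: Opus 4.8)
The plan is to reduce the two-sided identity (\ref{eq:meta-factorization-eq}) to two independent one-sided identities, one for the column-space projector and one for the row-space projector, and then compose them. First I would unpack the hypothesis. Saying that $\mathbf{F}$ is a basis for the column space of the rank-$k$ matrix $\mathbf{A}$ means precisely that $\mathbf{F}$ is an $m\times k$ matrix of full column rank $k$ with $\mathcal{C}(\mathbf{F}) = \mathcal{C}(\mathbf{A})$; consequently every column of $\mathbf{A}$ is a combination of columns of $\mathbf{F}$, so there exists a $k\times n$ matrix $\mathbf{C}$ with $\mathbf{A} = \mathbf{F}\mathbf{C}$. Symmetrically, $\mathbf{H}$ being a basis for the row space gives $\mathcal{C}(\mathbf{H}) = \mathcal{C}(\mathbf{A}^*)$ and a factorization $\mathbf{A} = \mathbf{E}\mathbf{H}^*$ for some $m\times k$ matrix $\mathbf{E}$.

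Next I would invoke the projector equation. Since $\mathbf{Y}^*$ and $\mathbf{X}$ solve (\ref{eq:projector-eq}), we have $\mathbf{Y}^*\mathbf{F} = \mathbf{I}_k$ and $\mathbf{H}^*\mathbf{X} = \mathbf{I}_k$. Substituting the column factorization gives
\begin{align}
\mathbf{F}\mathbf{Y}^*\mathbf{A} = \mathbf{F}\mathbf{Y}^*\mathbf{F}\mathbf{C} = \mathbf{F}\mathbf{C} = \mathbf{A},
\end{align}
so the column-space projector $\mathbf{P} = \mathbf{F}\mathbf{Y}^*$ fixes $\mathbf{A}$ on the left, $\mathbf{P}\mathbf{A} = \mathbf{A}$. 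The mirror computation with the row factorization gives
\begin{align}
\mathbf{A}\mathbf{X}\mathbf{H}^* = \mathbf{E}\mathbf{H}^*\mathbf{X}\mathbf{H}^* = \mathbf{E}\mathbf{H}^* = \mathbf{A},
\end{align}
so $\mathbf{A}\mathbf{R} = \mathbf{A}$ for $\mathbf{R} = \mathbf{X}\mathbf{H}^*$. By Theorem~\ref{Theorem:projection} the matrices $\mathbf{P}$ and $\mathbf{R}$ are genuine rank-$k$ idempotents, which makes the geometric statement (projection onto $\mathcal{C}(\mathbf{A})$ and onto the row space) precise, though only the two algebraic identities above are actually needed. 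Composing them yields
\begin{align}
\mathbf{F}\mathbf{Y}^*\mathbf{A}\mathbf{X}\mathbf{H}^* = \mathbf{P}\bigl(\mathbf{A}\mathbf{R}\bigr) = \mathbf{P}\mathbf{A} = \mathbf{A},
\end{align}
which is exactly (\ref{eq:meta-factorization-eq}).

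I do not expect a genuine obstacle here: once the hypotheses are read correctly the argument is a two-line consequence of the projector equation together with the defining property of a basis. The one point that deserves care is being explicit about what ``$\mathbf{F}$ and $\mathbf{H}$ represent bases for the column and row space of $\mathbf{A}$'' means, namely full column rank $k$ \emph{and} matching ranges $\mathcal{C}(\mathbf{F}) = \mathcal{C}(\mathbf{A})$, $\mathcal{C}(\mathbf{H}) = \mathcal{C}(\mathbf{A}^*)$; it is precisely the range condition that licenses the factorizations $\mathbf{A} = \mathbf{F}\mathbf{C}$ and $\mathbf{A} = \mathbf{E}\mathbf{H}^*$. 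Without it, $\mathbf{P}$ and $\mathbf{R}$ would fix only the components of $\mathbf{A}$ lying in $\mathcal{C}(\mathbf{F})$ and $\mathcal{C}(\mathbf{H})$ and the reconstruction would fail. I would close by remarking that the hypothesis is non-vacuous: in this setting $\mathbf{Y}^* = \mathbf{F}^+$ and $\mathbf{X} = (\mathbf{H}^*)^+$ always solve the projector equation by Theorem~\ref{Theorem:projector-eq-sol}, recovering the orthogonal-projector special case.
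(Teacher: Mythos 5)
Your proposal is correct and follows essentially the same strategy as the paper's proof: establish the two one-sided identities $\mathbf{F}\mathbf{Y}^*\mathbf{A}=\mathbf{A}$ and $\mathbf{A}\mathbf{X}\mathbf{H}^*=\mathbf{A}$, then compose. The only cosmetic difference is that you work from the full-rank factorizations $\mathbf{A}=\mathbf{F}\mathbf{C}$ and $\mathbf{A}=\mathbf{E}\mathbf{H}^*$ together with the projector equation directly (so Theorem~\ref{Theorem:projection} is not strictly needed), whereas the paper reaches the same identities by invoking Theorem~\ref{Theorem:projection} for idempotence and the range characterization $\mathcal{C}(\mathbf{P})=\mathcal{C}(\mathbf{A})$.
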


\begin{proof}
The result follows from the elementary properties of projections. By Theorem~\ref{Theorem:projection}, $\mathbf{P}=\mathbf{F}\mathbf{Y}^*$ is a~projection. By the assumption made, we also have $\mathcal{C}(\mathbf{A}) = \mathcal{C}(\mathbf{P})$. Take any $\mathbf{a}\in\mathcal{C}(\mathbf{A}) = \{\mathbf{y}\in\mathbb{R}^m\mid\mathbf{y}=\mathbf{P}\mathbf{x}\}$. It follows that 
\begin{align}
\mathbf{P}\mathbf{a} = \mathbf{P}(\mathbf{P}\mathbf{x}) = \mathbf{P}\mathbf{x} = \mathbf{a}.
\end{align}
The same argument holds for $\mathbf{R} = \mathbf{X}\mathbf{H}^*$. For any $\mathbf{b}\in\mathcal{C}(\mathbf{A}^*) = \{\mathbf{w}\in\mathbb{R}^n\mid\mathbf{w}=\mathbf{R}^*\mathbf{v}\}$, we have %
\begin{align}
\mathbf{R}^*\mathbf{b} = \mathbf{b}.
\end{align}
As a~result, $\mathbf{P}\mathbf{A} = \mathbf{A}$ and $\mathbf{R}^*\mathbf{A}^* = \mathbf{A}^*$, which gives $\mathbf{P} \mathbf{A} \mathbf{R} = \mathbf{A}$. 
\end{proof}

Basic meta-factorization proceeds according to the following steps (illustrated with Matlab code):
\begin{enumerate}
\item[\textbf{Step 1}:] Given a matrix $\mathbf{A}$, select basis $\mathbf{F}$ for the column space $\mathcal{C}(\mathbf{A})$ and basis $\mathbf{H}$ for the row space $\mathcal{C}(\mathbf{A}^*)$.

\item[\textbf{Step 2}:] Solve the projector equation $\mathbf{Y}^*\mathbf{F} = \mathbf{H}^*\mathbf{X} = \mathbf{I}_k$ to find the input basis $\mathbf{X}$ and the output basis $\mathbf{Y}$:
\begin{verbatim}
[QY,RY] = qr(B'*F,0);       % If needed, select B and D,    
[QX,RX] = qr(H'*D,0);       % otherwise, set B = F and D = H.
Y       = (RY\(QY'*B'))';   % If rank(B'*F) = rank(H'*D) = k,
X       = (D/RX)*QX';       % calculate Y and X.
\end{verbatim}

\item[\textbf{Step 3}:] Calculate the mixing matrix $\mathbf{G} = \mathbf{Y}^*\mathbf{A}\mathbf{X}$ and (if needed) perform its factorization to obtain the desired decomposition $\mathbf{A} = \mathbf{F}\mathbf{G}\mathbf{H}^*$:
\begin{verbatim}
G       = Y'*A*X;
% If needed, perform additional factorization of G, e.g.,
% [U,S,V] = svd(G), [Q,R] = qr(G) or [L,U] = lu(G), etc.
Ar      = F*G*H';
\end{verbatim}
\end{enumerate}
The procedure finds solutions of the meta-factorization equations, 
\begin{align}\label{eq:generator}
\setstackgap{L}{15pt}\def\stacktype{L}\def\sz{\scriptstyle}
\begin{aligned}
\stackunder{\mathbf{G}}{\sz (k\times k)} &=  
\stackunder{\mathbf{Y}^*}{\sz (k\times m)}&& 
\stackunder{\mathbf{A}}{\sz (m\times n)} &&
\stackunder{\mathbf{X},}{\sz (n\times k)}
\\
\stackunder{\mathbf{A}}{\sz (m\times n)} &= 
\stackunder{\mathbf{F}}{\sz (m\times k)} &&
\stackunder{\mathbf{G}}{\sz (k\times k)} &&
\stackunder{\mathbf{H}^*.}{\sz (k\times n)}
\end{aligned}
\end{align}

Matrices $\mathbf{F}$ and $\mathbf{H}$ are the parameters of the procedure, and their role is to define bases for the column space and the row space of $\mathbf{A}$. Given $\mathbf{F}$ and $\mathbf{H}$, matrices $\mathbf{X}$, $\mathbf{Y}$, and $\mathbf{G}$ are determined in a way that allows $\mathbf{A}$ to be reconstructed from $\mathbf{G}$. As a result, $\mathbf{A}$ is decomposed into the product of $\mathbf{F}$, $\mathbf{G}$, and $\mathbf{H}$, and $\mathbf{G}$ into the product of $\mathbf{Y}$, $\mathbf{X}$ and $\mathbf{A}$. Finally, factorization of the mixing matrix $\mathbf{G}$, i.e., meta-factorization, shapes the structure of projectors and their numerical properties. 

Notice that the proof of Theorem~\ref{Theorem:meta-factorization} only requires idempotent $\mathbf{F} \mathbf{Y}^*$ and $\mathbf{X} \mathbf{H}^*$. We can take advantage of that fact and use Theorem~\ref{Theorem:idempotence} to formulate a more general result. 

\begin{corollary}\label{Theorem:generalized-meta-factorization}
The reconstruction equation holds, if
\begin{align}
\mathbf{Y}^* = (\mathbf{B}^*\mathbf{F})^+\mathbf{B}^*
\quad\text{and}\quad
\mathbf{X} = \mathbf{D}(\mathbf{H}^*\mathbf{D})^+,
\end{align}
where $\mathbf{B}$ and $\mathbf{D}$ satisfy condition: 
\begin{align}
\mathrm{rank}(\mathbf{B}^*\mathbf{F}) = \mathrm{rank}(\mathbf{H}^*\mathbf{D}) = k,
\end{align}
and $\mathbf{F}$ and $\mathbf{H}$ represent bases for the column and row space of $\mathbf{A}$.
\end{corollary}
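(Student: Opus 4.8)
The plan is to reduce the claim to the projection argument already used in the proof of Theorem~\ref{Theorem:meta-factorization}, substituting the generalized-inverse identities of Theorem~\ref{Theorem:idempotence} for the rank bookkeeping employed there. First I would invoke Theorem~\ref{Theorem:idempotence}: since $\mathrm{rank}(\mathbf{B}^*\mathbf{F}) = \mathrm{rank}(\mathbf{H}^*\mathbf{D}) = k$, the matrices $\mathbf{Y}^* = (\mathbf{B}^*\mathbf{F})^+\mathbf{B}^*$ and $\mathbf{X} = \mathbf{D}(\mathbf{H}^*\mathbf{D})^+$ satisfy
\begin{align}
\mathbf{F}\mathbf{Y}^*\mathbf{F} = \mathbf{F} \quad\text{and}\quad \mathbf{H}^*\mathbf{X}\mathbf{H}^* = \mathbf{H}^*.
\end{align}
From the first identity I would deduce that $\mathbf{P} := \mathbf{F}\mathbf{Y}^*$ is idempotent, since $\mathbf{P}^2 = \mathbf{F}\mathbf{Y}^*\mathbf{F}\mathbf{Y}^* = (\mathbf{F}\mathbf{Y}^*\mathbf{F})\mathbf{Y}^* = \mathbf{F}\mathbf{Y}^* = \mathbf{P}$; symmetrically $\mathbf{R} := \mathbf{X}\mathbf{H}^*$ is idempotent, and hence so is $\mathbf{R}^*$.

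Second, I would pin down the ranges of these (now generally oblique) projectors. The inclusion $\mathcal{C}(\mathbf{P}) \subseteq \mathcal{C}(\mathbf{F})$ is immediate, while $\mathbf{P}\mathbf{F} = \mathbf{F}\mathbf{Y}^*\mathbf{F} = \mathbf{F}$ supplies the reverse inclusion, so $\mathcal{C}(\mathbf{P}) = \mathcal{C}(\mathbf{F}) = \mathcal{C}(\mathbf{A})$, the last equality being the hypothesis that $\mathbf{F}$ is a basis for the column space of $\mathbf{A}$. Taking conjugate transposes in $\mathbf{H}^*\mathbf{X}\mathbf{H}^* = \mathbf{H}^*$ gives $\mathbf{H}\mathbf{X}^*\mathbf{H} = \mathbf{H}$, and the same reasoning applied to $\mathbf{R}^* = \mathbf{H}\mathbf{X}^*$ yields $\mathcal{C}(\mathbf{R}^*) = \mathcal{C}(\mathbf{H}) = \mathcal{C}(\mathbf{A}^*)$.

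Finally I would close exactly as in the proof of Theorem~\ref{Theorem:meta-factorization}: since $\mathbf{P}$ is an idempotent whose range is $\mathcal{C}(\mathbf{A})$, it fixes every vector of $\mathcal{C}(\mathbf{A})$, so $\mathbf{P}\mathbf{A} = \mathbf{A}$; likewise $\mathbf{R}^*$ fixes $\mathcal{C}(\mathbf{A}^*)$, so $\mathbf{R}^*\mathbf{A}^* = \mathbf{A}^*$, i.e.\ $\mathbf{A}\mathbf{R} = \mathbf{A}$. Combining these, $\mathbf{F}\mathbf{Y}^*\mathbf{A}\mathbf{X}\mathbf{H}^* = \mathbf{P}\mathbf{A}\mathbf{R} = \mathbf{A}$, which is the reconstruction equation.

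The one step deserving care — and the only place the argument genuinely departs from the rank-$k$, projector-equation setting of Theorem~\ref{Theorem:meta-factorization} — is the range identity $\mathcal{C}(\mathbf{P}) = \mathcal{C}(\mathbf{F})$ and its transpose counterpart. There it came for free because the projector equation forces $\mathrm{rank}(\mathbf{P}) = k = \mathrm{rank}(\mathbf{F})$; here, with $\mathbf{B}\neq\mathbf{F}$ and $\mathbf{B}^*\mathbf{F}$ possibly not square, one must instead read it off directly from $\mathbf{P}\mathbf{F} = \mathbf{F}$ as above. Once this is in hand, the rest is a verbatim repetition of the earlier projection argument, so I anticipate no further obstacle.
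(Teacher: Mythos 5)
Your proposal is correct and follows the route the paper itself sketches: the remark immediately preceding the corollary observes that the proof of Theorem~\ref{Theorem:meta-factorization} only needs $\mathbf{F}\mathbf{Y}^*$ and $\mathbf{X}\mathbf{H}^*$ to be idempotent, and then appeals to Theorem~\ref{Theorem:idempotence} for the identities $\mathbf{F}\mathbf{Y}^*\mathbf{F}=\mathbf{F}$ and $\mathbf{H}^*\mathbf{X}\mathbf{H}^*=\mathbf{H}^*$. You fill in the details the paper leaves implicit — notably the observation that $\mathbf{P}\mathbf{F}=\mathbf{F}$ forces $\mathcal{C}(\mathbf{P})=\mathcal{C}(\mathbf{F})$ even when the projector equation $\mathbf{Y}^*\mathbf{F}=\mathbf{I}_k$ need not hold — which is exactly the step that replaces the rank counting used in Theorem~\ref{Theorem:projection}.
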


We can further adjust the meta-factorization procedure to admit low-rank bases as well. Given a~rank-$k$ matrix $\mathbf{A}$ and some positive $r \le k$, we can find the required rank-$r$ projectors by solving the projector equation (\ref{eq:projector-eq}) with $\mathbf{I}_r$ for the low-rank bases $\mathbf{F}_r$ and $\mathbf{H}_r$. Setting $\mathbf{B} = \mathbf{F}_r$ and $\mathbf{D} = \mathbf{H}_r$ is preferred in this case. The procedure also admits designing one projector only, in which case it is enough to work with either $\mathbf{F}$ or $\mathbf{H}$.

\subsection{Factorizations as solutions of linear matrix equations}

Finally, this section develops the prospect of meta-factorization that describes matrix factorizations as solutions of linear matrix equations. 

In his seminal work on the general inverse for matrices, Penrose characterized solutions to linear matrix equations. The statement of the theorem is presented below in the notation of the present paper.

\begin{theorem}[Penrose \cite{penrose1955generalized}]\label{thm:Penrose}
A necessary and sufficient condition for the equation
\begin{align}\label{eq:PenroseLME}
\mathbf{F}\mathbf{G}\mathbf{H}^* = \mathbf{A}
\end{align}
to have a solution $\mathbf{G}$ is
\begin{align}\label{eq:PenroseNSC}
\mathbf{F}\mathbf{F}^+\mathbf{A}(\mathbf{H}^*)^+\mathbf{H}^* = \mathbf{A},
\end{align}
in which case the general solution is
\begin{align}\label{eq:PenroseProj}
\mathbf{G} = 
\mathbf{F}^+\mathbf{A}(\mathbf{H}^*)^+ 
+ 
\mathbf{W} - \mathbf{F}^+\mathbf{F}\mathbf{W}\mathbf{H}^*(\mathbf{H}^*)^+,
\end{align}
where $\mathbf{W}$ is arbitrary.
\end{theorem}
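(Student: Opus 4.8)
The plan is to establish the three assertions in turn --- necessity of condition~(\ref{eq:PenroseNSC}), its sufficiency, and the description~(\ref{eq:PenroseProj}) of the general solution --- using only the defining identities of the Moore--Penrose pseudoinverse, in particular $\mathbf{F}\mathbf{F}^+\mathbf{F}=\mathbf{F}$ and $\mathbf{H}^*(\mathbf{H}^*)^+\mathbf{H}^*=\mathbf{H}^*$, which were already used in the proof of Theorem~\ref{Theorem:projector-eq-sol}.

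For necessity, I would suppose $\mathbf{G}$ satisfies $\mathbf{F}\mathbf{G}\mathbf{H}^*=\mathbf{A}$ and substitute this into the left-hand side of~(\ref{eq:PenroseNSC}); regrouping gives $\mathbf{F}\mathbf{F}^+\mathbf{A}(\mathbf{H}^*)^+\mathbf{H}^* = (\mathbf{F}\mathbf{F}^+\mathbf{F})\mathbf{G}(\mathbf{H}^*(\mathbf{H}^*)^+\mathbf{H}^*) = \mathbf{F}\mathbf{G}\mathbf{H}^* = \mathbf{A}$, so the condition is forced. For sufficiency, I would assume~(\ref{eq:PenroseNSC}) and exhibit the particular solution $\mathbf{G}_\star = \mathbf{F}^+\mathbf{A}(\mathbf{H}^*)^+$: indeed $\mathbf{F}\mathbf{G}_\star\mathbf{H}^* = \mathbf{F}\mathbf{F}^+\mathbf{A}(\mathbf{H}^*)^+\mathbf{H}^*$, which equals $\mathbf{A}$ precisely by~(\ref{eq:PenroseNSC}).

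It then remains to show that~(\ref{eq:PenroseProj}) captures exactly the solution set. First I would check that every matrix of that form solves the equation: expanding $\mathbf{F}\bigl(\mathbf{F}^+\mathbf{A}(\mathbf{H}^*)^+ + \mathbf{W} - \mathbf{F}^+\mathbf{F}\mathbf{W}\mathbf{H}^*(\mathbf{H}^*)^+\bigr)\mathbf{H}^*$, the first term contributes $\mathbf{A}$ as above, while the second and third terms both reduce to $\mathbf{F}\mathbf{W}\mathbf{H}^*$ --- the third because $(\mathbf{F}\mathbf{F}^+\mathbf{F})\mathbf{W}(\mathbf{H}^*(\mathbf{H}^*)^+\mathbf{H}^*)=\mathbf{F}\mathbf{W}\mathbf{H}^*$ --- and so cancel. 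Conversely, given any solution $\mathbf{G}_0$, I would substitute $\mathbf{W}=\mathbf{G}_0$ into~(\ref{eq:PenroseProj}) and use $\mathbf{F}\mathbf{G}_0\mathbf{H}^*=\mathbf{A}$ to collapse the right-hand side: since $\mathbf{F}^+\mathbf{F}\mathbf{G}_0\mathbf{H}^*(\mathbf{H}^*)^+ = \mathbf{F}^+\mathbf{A}(\mathbf{H}^*)^+$, the expression equals $\mathbf{G}_0$, proving it was already of the stated form. I do not expect a genuine obstacle here; the only mildly delicate point is the surjectivity of the parametrization, which is seen most cleanly by feeding a given solution back in as the free parameter $\mathbf{W}$, rather than by characterizing the homogeneous solutions of $\mathbf{F}\mathbf{W}\mathbf{H}^*=\mathbf{0}$ directly (although that alternative also works, since those solutions are exactly the matrices $\mathbf{W}-\mathbf{F}^+\mathbf{F}\mathbf{W}\mathbf{H}^*(\mathbf{H}^*)^+$).
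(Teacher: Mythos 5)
Your proof is correct. Note, however, that the paper does not actually supply a proof of this theorem: it is stated as a citation to Penrose's 1955 paper, with the remark that ``the proof presented by Penrose exploits the properties of pseudoinverses.'' Your argument is precisely that standard pseudoinverse-identity proof, so there is no conflict with the paper --- you have simply filled in what the paper elides. Each step checks out: necessity follows by substituting $\mathbf{A}=\mathbf{F}\mathbf{G}\mathbf{H}^*$ and invoking $\mathbf{F}\mathbf{F}^+\mathbf{F}=\mathbf{F}$, $\mathbf{H}^*(\mathbf{H}^*)^+\mathbf{H}^*=\mathbf{H}^*$; sufficiency via the particular solution $\mathbf{F}^+\mathbf{A}(\mathbf{H}^*)^+$; the verification that the parametrized family solves the equation is a short cancellation; and the surjectivity is settled by the ``feed $\mathbf{G}_0$ back in as $\mathbf{W}$'' device.

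One useful comparison: the paper \emph{does} prove the oblique generalization, Theorem~\ref{thm:Penrose-oblique}, and your approach is structurally cleaner on the surjectivity point. There, the paper verifies only that $\mathbf{W}-\mathbf{Y}^*\mathbf{F}\mathbf{W}\mathbf{H}^*\mathbf{X}$ solves the homogeneous equation $\mathbf{F}\mathbf{G}\mathbf{H}^*=\mathbf{0}$ and then declares the general solution found, without showing that \emph{every} homogeneous solution has this form. Your substitution $\mathbf{W}=\mathbf{G}_0$, which collapses~(\ref{eq:PenroseProj}) to $\mathbf{G}_0$ using $\mathbf{F}^+\mathbf{F}\mathbf{G}_0\mathbf{H}^*(\mathbf{H}^*)^+=\mathbf{F}^+\mathbf{A}(\mathbf{H}^*)^+$, closes exactly that gap and carries over verbatim to the oblique setting (replacing $\mathbf{F}^+$ by $\mathbf{Y}^*$ and $(\mathbf{H}^*)^+$ by $\mathbf{X}$, using the projector equation $\mathbf{Y}^*\mathbf{F}=\mathbf{H}^*\mathbf{X}=\mathbf{I}_k$ in place of the Penrose identities). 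Another difference worth registering: the paper's Theorem~\ref{thm:Penrose-oblique} deduces necessity from the projection machinery (Theorem~\ref{Theorem:idempotence} and Corollary~\ref{Theorem:generalized-meta-factorization}), whereas you deduce it by a one-line algebraic substitution; the former buys conceptual uniformity with the meta-factorization viewpoint, the latter buys brevity and self-containment.
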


The proof presented by Penrose exploits the properties of pseudoinverses, introduced in the very same paper. Meta-factorization, exploiting the properties of projections, shows that another perspective can be taken in which solutions to the addressed matrix equation become matrix factorizations. Indeed, when we interpret (\ref{eq:PenroseLME}) as a description of a matrix factorization, then equation (\ref{eq:PenroseNSC}) defines that matrix factorization by the properly constructed projection matrices. The latter equation is clearly a~special case of the reconstruction equation (\ref{eq:meta-factorization-eq}) of Theorem~\ref{Theorem:meta-factorization}. In other words, matrix factorization given by (\ref{eq:PenroseLME}) is a solution of the meta-factorization equations (\ref{eq:meta-factorization-eq}). 

\begin{theorem}\label{thm:Penrose-oblique}
Given $\mathbf{A}$, $\mathbf{F}$ and $\mathbf{H}$, a~necessary and sufficient condition for the equation
\begin{align}\label{eq:PenroseLME-oblique}
\mathbf{F}\mathbf{G}\mathbf{H}^* = \mathbf{A}
\end{align}
to have a solution $\mathbf{G}$ is
\begin{align}\label{eq:PenroseNSC-oblique}
\mathbf{F}\mathbf{Y}^*
\mathbf{A}
\mathbf{X}\mathbf{H}^* = \mathbf{A},
\end{align}
where
\begin{align}
\mathbf{Y}^* = (\mathbf{B}^*\mathbf{F})^+\mathbf{B}^*
\quad\text{and}\quad
\mathbf{X} = \mathbf{D}(\mathbf{H}^*\mathbf{D})^+,
\end{align}
and $\mathrm{rank}(\mathbf{B}^*\mathbf{F}) = k = \mathrm{rank}(\mathbf{H}^*\mathbf{D})$. The general solution is
\begin{align}\label{eq:PenroseProj-oblique}
\mathbf{G} = 
\mathbf{Y}^*
\mathbf{A}
\mathbf{X}
+ 
\mathbf{W} - \mathbf{Y}^*\mathbf{F}\mathbf{W}\mathbf{H}^*\mathbf{X},
\end{align}
where $\mathbf{W}$ is arbitrary.
\end{theorem}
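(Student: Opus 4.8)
The plan is to run the argument behind Penrose's Theorem~\ref{thm:Penrose} essentially verbatim, but with the orthogonal projectors $\mathbf{F}\mathbf{F}^{+}$ and $(\mathbf{H}^{*})^{+}\mathbf{H}^{*}$ replaced by the oblique projectors $\mathbf{F}\mathbf{Y}^{*}$ and $\mathbf{X}\mathbf{H}^{*}$. The only properties of $\mathbf{Y}^{*}$ and $\mathbf{X}$ the argument needs are the generalized--inverse identities supplied by Theorem~\ref{Theorem:idempotence} under the stated rank hypotheses,
\begin{align}\label{eq:sketch-weak-inv}
\mathbf{F}\mathbf{Y}^{*}\mathbf{F} = \mathbf{F}
\quad\text{and}\quad
\mathbf{H}^{*}\mathbf{X}\mathbf{H}^{*} = \mathbf{H}^{*}.
\end{align}
I will deliberately avoid invoking $\mathbf{Y}^{*}\mathbf{F} = \mathbf{I}_{k}$ or $\mathbf{H}^{*}\mathbf{X} = \mathbf{I}_{k}$, which need not hold in the oblique regime; the generalization works precisely because only (\ref{eq:sketch-weak-inv}) is required.

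First I would settle the equivalence of solvability with condition (\ref{eq:PenroseNSC-oblique}). Sufficiency is immediate: if (\ref{eq:PenroseNSC-oblique}) holds then $\mathbf{G} = \mathbf{Y}^{*}\mathbf{A}\mathbf{X}$ is a solution, because $\mathbf{F}\mathbf{G}\mathbf{H}^{*} = \mathbf{F}\mathbf{Y}^{*}\mathbf{A}\mathbf{X}\mathbf{H}^{*} = \mathbf{A}$. For necessity, assume some $\mathbf{G}$ satisfies $\mathbf{F}\mathbf{G}\mathbf{H}^{*} = \mathbf{A}$ and substitute it into the left side of (\ref{eq:PenroseNSC-oblique}):
\begin{align}
\mathbf{F}\mathbf{Y}^{*}\mathbf{A}\mathbf{X}\mathbf{H}^{*}
= \mathbf{F}\mathbf{Y}^{*}\bigl(\mathbf{F}\mathbf{G}\mathbf{H}^{*}\bigr)\mathbf{X}\mathbf{H}^{*}
= \bigl(\mathbf{F}\mathbf{Y}^{*}\mathbf{F}\bigr)\,\mathbf{G}\,\bigl(\mathbf{H}^{*}\mathbf{X}\mathbf{H}^{*}\bigr)
= \mathbf{F}\mathbf{G}\mathbf{H}^{*}
= \mathbf{A},
\end{align}
where the third equality is (\ref{eq:sketch-weak-inv}). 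Hence (\ref{eq:PenroseNSC-oblique}) is necessary and sufficient.

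Next I would show that (\ref{eq:PenroseProj-oblique}) parametrizes all solutions. In one direction, assuming (\ref{eq:PenroseNSC-oblique}), I would substitute $\mathbf{G} = \mathbf{Y}^{*}\mathbf{A}\mathbf{X} + \mathbf{W} - \mathbf{Y}^{*}\mathbf{F}\mathbf{W}\mathbf{H}^{*}\mathbf{X}$ into $\mathbf{F}\mathbf{G}\mathbf{H}^{*}$ and expand: the first term returns $\mathbf{A}$ by (\ref{eq:PenroseNSC-oblique}), while the last two cancel since $\mathbf{F}\bigl(\mathbf{Y}^{*}\mathbf{F}\bigr)\mathbf{W}\bigl(\mathbf{H}^{*}\mathbf{X}\bigr)\mathbf{H}^{*} = \bigl(\mathbf{F}\mathbf{Y}^{*}\mathbf{F}\bigr)\mathbf{W}\bigl(\mathbf{H}^{*}\mathbf{X}\mathbf{H}^{*}\bigr) = \mathbf{F}\mathbf{W}\mathbf{H}^{*}$ by (\ref{eq:sketch-weak-inv}); so the displayed formula is a solution for every $k\times k$ matrix $\mathbf{W}$. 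In the other direction, if $\mathbf{G}_{0}$ is an arbitrary solution, the choice $\mathbf{W} = \mathbf{G}_{0}$ gives $\mathbf{Y}^{*}\mathbf{F}\mathbf{W}\mathbf{H}^{*}\mathbf{X} = \mathbf{Y}^{*}\bigl(\mathbf{F}\mathbf{G}_{0}\mathbf{H}^{*}\bigr)\mathbf{X} = \mathbf{Y}^{*}\mathbf{A}\mathbf{X}$, so the formula evaluates to $\mathbf{Y}^{*}\mathbf{A}\mathbf{X} + \mathbf{G}_{0} - \mathbf{Y}^{*}\mathbf{A}\mathbf{X} = \mathbf{G}_{0}$; hence no solution is missed.

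I do not expect a real obstacle here: essentially all the work is already done inside Theorem~\ref{Theorem:idempotence}, and what remains is bookkeeping. The one point to stay careful about is to keep the blocks $\mathbf{Y}^{*}\mathbf{F}$ and $\mathbf{H}^{*}\mathbf{X}$ intact and collapse them only when they sit between a left $\mathbf{F}$ and the rest (respectively the rest and a right $\mathbf{H}^{*}$), since neither block is the identity. As a closing remark I would note the consistency with Corollary~\ref{Theorem:generalized-meta-factorization}: when $\mathbf{F}$ and $\mathbf{H}$ genuinely span $\mathcal{C}(\mathbf{A})$ and $\mathcal{C}(\mathbf{A}^{*})$, condition (\ref{eq:PenroseNSC-oblique}) holds automatically, and the specialization $\mathbf{B}=\mathbf{F}$, $\mathbf{D}=\mathbf{H}$ (so that $\mathbf{Y}^{*}=\mathbf{F}^{+}$ and $\mathbf{X}=(\mathbf{H}^{*})^{+}$ by Theorem~\ref{Theorem:projector-eq-sol}) recovers Penrose's Theorem~\ref{thm:Penrose} exactly.
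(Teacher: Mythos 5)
Your proof is correct, and it departs from the paper's argument in one substantive way that is worth noting. For the necessity direction, the paper argues indirectly: from solvability of $\mathbf{F}\mathbf{G}\mathbf{H}^{*}=\mathbf{A}$ it infers $\mathcal{C}(\mathbf{F})=\mathcal{C}(\mathbf{A})$ and $\mathcal{C}(\mathbf{H}^{*})=\mathcal{C}(\mathbf{A}^{*})$ and then invokes Corollary~\ref{Theorem:generalized-meta-factorization}. You instead prove necessity by the direct algebraic substitution
\begin{align}
\mathbf{F}\mathbf{Y}^{*}\mathbf{A}\mathbf{X}\mathbf{H}^{*}
= (\mathbf{F}\mathbf{Y}^{*}\mathbf{F})\,\mathbf{G}\,(\mathbf{H}^{*}\mathbf{X}\mathbf{H}^{*})
= \mathbf{F}\mathbf{G}\mathbf{H}^{*} = \mathbf{A},
\end{align}
which is Penrose's original move and uses only the weak-inverse identities from Theorem~\ref{Theorem:idempotence}. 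This is arguably the cleaner route: the paper's inference that solvability forces $\mathcal{C}(\mathbf{F})=\mathcal{C}(\mathbf{A})$ gives only $\mathcal{C}(\mathbf{A})\subseteq\mathcal{C}(\mathbf{F})$ in general (equality requires $\mathrm{rank}(\mathbf{A})=k$, which is not explicitly assumed), whereas your substitution sidesteps any column-space claim entirely. You are also right to be careful never to collapse $\mathbf{Y}^{*}\mathbf{F}$ or $\mathbf{H}^{*}\mathbf{X}$ to $\mathbf{I}_{k}$, since these need not hold in the oblique setting. Finally, for the general solution, the paper verifies that $\mathbf{W}-\mathbf{Y}^{*}\mathbf{F}\mathbf{W}\mathbf{H}^{*}\mathbf{X}$ solves the homogeneous equation but leaves the completeness of the parametrization implicit; you supply it explicitly by showing that $\mathbf{W}=\mathbf{G}_{0}$ recovers any given solution $\mathbf{G}_{0}$, which is a small but genuine completion of that step. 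Overall: same skeleton as the paper, but with a tighter necessity argument and a more complete surjectivity check.
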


\begin{proof}
If $\mathbf{G}$ satisfies (\ref{eq:PenroseLME-oblique}), then $\mathcal{C}(\mathbf{F}) = \mathcal{C}(\mathbf{A})$ and $\mathcal{C}(\mathbf{H}^*) = \mathcal{C}(\mathbf{A}^*)$. By Theorem~\ref{Theorem:idempotence} and Corollary~\ref{Theorem:generalized-meta-factorization} we have
\begin{align}
\mathbf{A} &= 
\mathbf{F}(\mathbf{B}^*\mathbf{F})^+\mathbf{B}^*
\mathbf{A}
\mathbf{D}(\mathbf{H}^*\mathbf{D})^+\mathbf{H}^*,
\quad\text{where}\quad
\mathbf{G} = 
(\mathbf{B}^*\mathbf{F})^+\mathbf{B}^*
\mathbf{A}
\mathbf{D}(\mathbf{H}^*\mathbf{D})^+ = 
\mathbf{Y}^*
\mathbf{A}
\mathbf{X}.
\end{align}
Conversely, if (\ref{eq:PenroseNSC-oblique}) holds, then $\mathbf{G} = 
(\mathbf{B}^*\mathbf{F})^+\mathbf{B}^*
\mathbf{A}
\mathbf{D}(\mathbf{H}^*\mathbf{D})^+$ is a particular solution of (\ref{eq:PenroseLME-oblique}). The general solution must also satisfy the homogeneous equation, $\mathbf{F}\mathbf{G}\mathbf{H}^* = \mathbf{0}$. Consider
\begin{align}
\mathbf{G} = \mathbf{W} - \mathbf{Y}^*\mathbf{F}\mathbf{W}\mathbf{H}^*\mathbf{X}.
\end{align}
By Theorem~\ref{Theorem:projector-eq-sol} and \ref{Theorem:idempotence}, we have
\begin{align}
\begin{aligned}
\mathbf{F}\mathbf{G}\mathbf{H}^* 
&= 
\mathbf{F}\mathbf{W}\mathbf{H}^* - \mathbf{F}\mathbf{Y}^*\mathbf{F}\mathbf{W}\mathbf{H}^*\mathbf{X}\mathbf{H}^*
\\
&=
\mathbf{F}\mathbf{W}\mathbf{H}^* - \mathbf{F}\mathbf{W}\mathbf{H}^* = \mathbf{0}.
\end{aligned}
\end{align}
Therefore, the general solution to (\ref{eq:PenroseLME-oblique}) is given by (\ref{eq:PenroseProj-oblique}).
\end{proof}

Notice that when the projector equation holds, the solution of the homogeneous equation is $\mathbf{G} = \mathbf{0}$. Otherwise, when the projector equation is not satisfied, $\mathbf{G}$ is rectangular and depends on $\mathbf{W}$.

As a special case of the result above we get the characterization of solutions of the vector equations. 
\begin{corollary}
If $\mathbf{c}\in\mathcal{C}(\mathbf{A})$, then the general solution of the vector equation
$\mathbf{A}\mathbf{x} = \mathbf{c}$ is 
\begin{align}
\mathbf{x} = \mathbf{Y}^*\mathbf{c}+(\mathbf{I}-\mathbf{Y^*}\mathbf{A})\mathbf{y},
\end{align}
where $\mathbf{Y}^* = (\mathbf{B}^*\mathbf{A})^+\mathbf{B}^*$ and $\mathrm{rank}(\mathbf{B}^*\mathbf{F}) = k$, and $\mathbf{y}$ is arbitrary.
\end{corollary}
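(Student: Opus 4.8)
The plan is to read this as the vector specialization of Theorem~\ref{thm:Penrose-oblique}: identifying $\mathbf{F}$ with $\mathbf{A}$, the unknown $\mathbf{G}$ with $\mathbf{x}$, the right-hand side with $\mathbf{c}$, and collapsing the row-space data $\mathbf{H}^*$, $\mathbf{D}$, $\mathbf{X}$ to the trivial $1\times 1$ identity, so that the general-solution formula (\ref{eq:PenroseProj-oblique}) turns into $\mathbf{x}=\mathbf{Y}^*\mathbf{c}+(\mathbf{I}-\mathbf{Y}^*\mathbf{A})\mathbf{y}$. Rather than chase that reduction through the dimensions, I would give a short self-contained proof. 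The only structural ingredient needed is that $\mathbf{Y}^*=(\mathbf{B}^*\mathbf{A})^+\mathbf{B}^*$ is a $\{1\}$-inverse of $\mathbf{A}$, that is, $\mathbf{A}\mathbf{Y}^*\mathbf{A}=\mathbf{A}$. This is precisely the assertion of Theorem~\ref{Theorem:idempotence} with $\mathbf{A}$ playing the role of $\mathbf{F}$; its hypothesis $\mathrm{rank}(\mathbf{B}^*\mathbf{A})=k$ is exactly the stated condition, since $\mathbf{A}=\mathbf{F}\mathbf{G}\mathbf{H}^*$ with $\mathbf{G}\mathbf{H}^*$ of full row rank $k$ gives $\mathrm{rank}(\mathbf{B}^*\mathbf{A})=\mathrm{rank}(\mathbf{B}^*\mathbf{F})$.

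With $\mathbf{A}\mathbf{Y}^*\mathbf{A}=\mathbf{A}$ in hand, I would first exhibit a particular solution. Because $\mathbf{c}\in\mathcal{C}(\mathbf{A})$ we may write $\mathbf{c}=\mathbf{A}\mathbf{z}$ for some $\mathbf{z}$, whence $\mathbf{A}(\mathbf{Y}^*\mathbf{c})=\mathbf{A}\mathbf{Y}^*\mathbf{A}\mathbf{z}=\mathbf{A}\mathbf{z}=\mathbf{c}$; so $\mathbf{x}_p=\mathbf{Y}^*\mathbf{c}$ solves $\mathbf{A}\mathbf{x}=\mathbf{c}$. (Incidentally, this shows the hypothesis $\mathbf{c}\in\mathcal{C}(\mathbf{A})$ — clearly necessary — is also sufficient for solvability.)

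Next I would identify the homogeneous solutions with the range of $\mathbf{I}-\mathbf{Y}^*\mathbf{A}$. On one side, $\mathbf{A}(\mathbf{I}-\mathbf{Y}^*\mathbf{A})\mathbf{y}=(\mathbf{A}-\mathbf{A}\mathbf{Y}^*\mathbf{A})\mathbf{y}=\mathbf{0}$ for every $\mathbf{y}$, so each such vector lies in the null space of $\mathbf{A}$. On the other side, if $\mathbf{A}\mathbf{x}_0=\mathbf{0}$ then $(\mathbf{I}-\mathbf{Y}^*\mathbf{A})\mathbf{x}_0=\mathbf{x}_0-\mathbf{Y}^*(\mathbf{A}\mathbf{x}_0)=\mathbf{x}_0$, so every null vector is already of that form. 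Hence $\{(\mathbf{I}-\mathbf{Y}^*\mathbf{A})\mathbf{y}:\mathbf{y}\ \text{arbitrary}\}$ equals $\ker\mathbf{A}$.

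Finally I would assemble the solution set. Any $\mathbf{x}$ of the stated shape satisfies $\mathbf{A}\mathbf{x}=\mathbf{A}(\mathbf{Y}^*\mathbf{c})+\mathbf{0}=\mathbf{c}$; conversely, given any solution $\mathbf{x}^\star$, the difference $\mathbf{x}^\star-\mathbf{Y}^*\mathbf{c}$ lies in $\ker\mathbf{A}$ and so equals $(\mathbf{I}-\mathbf{Y}^*\mathbf{A})\mathbf{y}$ for some $\mathbf{y}$, yielding the claimed form. The argument has no genuine obstacle; the only point that wants care is the bookkeeping that lets Theorem~\ref{Theorem:idempotence} be invoked with $\mathbf{A}$ in place of $\mathbf{F}$ — in particular reconciling the hypothesis $\mathrm{rank}(\mathbf{B}^*\mathbf{F})=k$ as written with the rank condition on $\mathbf{B}^*\mathbf{A}$ actually used, and noting that here $\mathbf{A}$ need not have full column rank, so $\mathbf{Y}^*\mathbf{A}$ is a proper oblique-type projector rather than the identity.
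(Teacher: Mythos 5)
Your proof is correct, and it takes a more self-contained route than the paper, which gives no explicit argument and simply presents the corollary as a special case of Theorem~\ref{thm:Penrose-oblique}. Instead of literally specializing (\ref{eq:PenroseProj-oblique}), you extract the single needed fact --- that $\mathbf{Y}^* = (\mathbf{B}^*\mathbf{A})^+\mathbf{B}^*$ is a $\{1\}$-inverse, $\mathbf{A}\mathbf{Y}^*\mathbf{A}=\mathbf{A}$ --- and then run the standard particular-plus-homogeneous argument: $\mathbf{Y}^*\mathbf{c}$ solves the system because $\mathbf{c}\in\mathcal{C}(\mathbf{A})$, and $\{(\mathbf{I}-\mathbf{Y}^*\mathbf{A})\mathbf{y}\}$ is exactly $\ker\mathbf{A}$ because $(\mathbf{I}-\mathbf{Y}^*\mathbf{A})$ fixes every null vector and $\mathbf{A}(\mathbf{I}-\mathbf{Y}^*\mathbf{A})=\mathbf{0}$. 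This has the genuine advantage of sidestepping the dimension bookkeeping that a literal invocation of Theorem~\ref{thm:Penrose-oblique} would require: there $\mathbf{G}$ is a square $k\times k$ mixing matrix and $\mathbf{H}^*$ is $k\times n$, so collapsing to the vector case $\mathbf{A}\mathbf{x}=\mathbf{c}$ (with $\mathbf{x}$ of size $n\times 1$ and $\mathbf{H}^*$ a scalar) is not a direct substitution into the statement as written. Your route avoids that friction entirely.

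One small caution: the hypothesis $\mathrm{rank}(\mathbf{B}^*\mathbf{F})=k$ in the corollary as printed is almost certainly a typo for $\mathrm{rank}(\mathbf{B}^*\mathbf{A})=k$, since $\mathbf{F}$ is not defined anywhere in the corollary's statement. You are right to read it as a condition on $\mathbf{B}^*\mathbf{A}$, but your attempted reconciliation --- appealing to a factorization $\mathbf{A}=\mathbf{F}\mathbf{G}\mathbf{H}^*$ with $\mathbf{G}\mathbf{H}^*$ of full row rank --- invokes structure that the corollary does not supply; it is cleaner simply to flag the $\mathbf{F}$ as a typo for $\mathbf{A}$ and move on. The substance of the argument is unaffected.
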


In the following section we show how the meta-factorization procedure works and how certain well-known matrix factorizations satisfy its equations.

\section{How meta-factorization describes known factorizations}
\label{section:How...}

Meta-factorization explores the structure of known factorizations, thereby revealing their properties and providing hints for possible modifications. These insights may become useful for designing numerical algorithms, as they provide ways to control the properties of numerical algorithms, including performance, stability, and complexity. The key benefit of meta-factorization is that it allows for modifying the factors of factorizations while preserving the ability of the factors' product to reconstruct the original matrix. 

This section illustrates how meta-factorization can be applied to describe and modify selected factorizations. First, it reconstructs the SVD. Next, it derives the form of the mixing matrix for the CPQR. Finally, by performing meta-factorization of the mixing matrix for QR-based decompositions, it develops a family of UTV-like factorizations.

\subsection{Singular value decomposition (SVD)}

Let us begin by illustrating how meta-factorization procedure works. We apply it to reconstruct the reduced SVD. 

For every rectangular $m\times n$ matrix $\mathbf{A}$, the SVD shows that $\mathbf{A}$ is isometric to the singular value matrix 
\begin{align}
\setstackgap{L}{15pt}\def\stacktype{L}\def\sz{\scriptstyle}
\begin{aligned}
\stackunder{\mathbf{S}}{\sz (m\times n)} = 
\stackunder{\mathbf{U}^*}{\sz (m\times m)} &&
\stackunder{\mathbf{A}}{\sz (m\times n)} &&
\stackunder{\mathbf{V}.}{\sz (n\times n)}
\end{aligned}
\end{align}

Matrices $\mathbf{U}$ and $\mathbf{V}$ are complex unitary, and $\mathbf{S}$ is a rectangular diagonal matrix with (real non-negative) singular values on the diagonal. The fundamental subspaces for $\mathbf{A}$, namely, the column space, the row space and the corresponding nullspaces, as well as the rank of $\mathbf{A}$, all emerge as a~result of this special factorization.

The reduced singular value decomposition factors a rank-$k$ matrix $\mathbf{A}$ into the following product:
\begin{align}
\setstackgap{L}{15pt}\def\stacktype{L}\def\sz{\scriptstyle}
\begin{aligned}
\stackunder{\mathbf{A}}{\sz (m\times n)} &=& 
\stackunder{\mathbf{U}(:,1{:}k)}{\sz (m\times k)} &&
\stackunder{\mathbf{S}(1{:}k,1{:}k)}{\sz (k\times k)} &&
\stackunder{\mathbf{V}(:,1{:}k)^*.}{\sz (k\times n)}
\end{aligned}
\end{align}

We can reconstruct the reduced singular value decomposition by following the steps of meta-factorization. The goal is to build the diagonal mixing matrix $\mathbf{G} = \mathbf{S}(1{:}k,1{:}k) = \mathrm{diag}(\sigma_1,\dots,\sigma_k)$. 

The first step is to select bases for $\mathcal{C}(\mathbf{A})$ and $\mathcal{C}(\mathbf{A}^*)$. As is well known, the required choice of bases is
\begin{align}
\mathbf{F} = \mathbf{U}(:,1{:}k)
\quad\text{and}\quad
\mathbf{H}^* = \mathbf{V}(:,1{:}k)^*,
\end{align}
where $\mathbf{U}(:,1{:}k)$ and $\mathbf{V}(:,1{:}k)$ hold orthonormal eigenvectors corresponding to the nonzero eigenvalues of $\mathbf{A}\mathbf{A}^*$ and $\mathbf{A}^*\mathbf{A}$. More generally, $\mathbf{U}$ and $\mathbf{V}$ are defined by the following (symmetric) eigenvalue decompositions,
\begin{align}
\begin{aligned}
(\mathbf{A}\mathbf{A}^*)\mathbf{U} = \mathbf{U}(\mathbf{S}\mathbf{S}^*)
\quad\text{and}\quad 
(\mathbf{A}^*\mathbf{A})\mathbf{V} = \mathbf{V}(\mathbf{S}^*\mathbf{S}).
\end{aligned}
\end{align}
The structure of the SVD decomposition is imposed by the choice of $\mathbf{F}$ and $\mathbf{H}$. In fact, this particular choice makes $\mathbf{S}$ diagonal. The columns of $\mathbf{H} = [\mathbf{v}_1\mid\dots\mid\mathbf{v}_k]$ are connected with the columns of $\mathbf{F} = [\mathbf{u}_1\mid\dots\mid\mathbf{u}_k]$ by the equations
\begin{align}\label{eq:svd bounds}
\mathbf{A}\mathbf{v}_i=\sigma_i\mathbf{u}_i
\quad\text{and}\quad
\mathbf{A}^*\mathbf{u}_i=\sigma_i\mathbf{v}_i
\quad\text{for}\quad
i = 1,\dots,k.
\end{align}
Thus, by orthonormality of $\mathbf{U}$ (and $\mathbf{V}$), we have
\begin{align}\label{eq:SVDdiagonalizationMs}
\mathbf{u}^*_i\mathbf{A}\mathbf{v}_i &= \sigma_i,\ i = 1,\dots,k,
\\
\mathbf{u}^*_i\mathbf{A}\mathbf{v}_j &= 0,\ i \neq j.
\end{align}
The second step of meta-factorization should confirm this fundamental fact and, indeed, it does. Solving the projector equation (\ref{eq:projector-eq}) for $\mathbf{Y}$ and $\mathbf{X}$, we obtain  
\begin{align}
\mathbf{Y}^* = \mathbf{U}(:,1{:}k)^* 
\quad\text{and}\quad 
\mathbf{X} = \mathbf{V}(:,1{:}k).
\end{align}
These are precisely the matrices that diagonalize $\mathbf{A}$ and make it isometric to the diagonal singular value matrix. This is demonstrated in the final third step of the meta-factorization procedure, which yields
\begin{align}
\begin{aligned}
\mathbf{G} = \mathbf{S}(1{:}k,1{:}k) = \mathbf{U}(:,1{:}k)^*\mathbf{A}\mathbf{V}(:,1{:}k) = \mathrm{diag}(\sigma_1,\dots,\sigma_k).
\end{aligned}
\end{align}
As desired, we have reconstructed the reduced SVD for $\mathbf{A}$, namely,
\begin{align}
\begin{aligned}
\mathbf{A} 
&= \mathbf{U}(:,1{:}k)\mathbf{U}(:,1{:}k)^*\mathbf{A}\mathbf{V}(:,1{:}k)\mathbf{V}(:,1{:}k)^*
\\
&= \mathbf{U}(:,1{:}k)\mathbf{S}(1{:}k,1{:}k)\mathbf{V}(:,1{:}k)^*.
\end{aligned}
\end{align}

\subsection{Column-Pivoted QR decomposition (CPQR)}

We now show how meta-factorization reveals the form of the mixing matrix $\mathbf{G}$ for the CPQR.

The CPQR decomposition factors any rectangular $m\times n$ matrix $\mathbf{A}$ into the product of an orthogonal $m\times m$ matrix $\mathbf{Q}$, an upper rectangular $m\times n$ matrix $\mathbf{R}$ and a rectangular $n\times n$ permutation matrix $\mathbf{\Pi}$:
\begin{align}
\setstackgap{L}{15pt}\def\stacktype{L}\def\sz{\scriptstyle}
\begin{aligned}
\stackunder{\mathbf{A}}{\sz (m\times n)} &=& 
\stackunder{\mathbf{Q}}{\sz (m\times m)} &&
\stackunder{\mathbf{R}}{\sz (m\times n)} &&
\stackunder{\mathbf{\Pi}^*.}{\sz (n\times n)}
\end{aligned}
\end{align}
The CPQR also reveals matrix rank; see also Chan, Gu and Eisenstat \cite{chan1987rank,gu1996efficient} for more details. Given a~rank-$k$ matrix $\mathbf{A}$, we obtain
\begin{align}
\begin{aligned}
\mathbf{R} = 
\begin{bmatrix}
\mathbf{R}_{11} & \mathbf{R}_{12}
\\
\mathbf{0} & \mathbf{0}
\end{bmatrix},
\end{aligned}
\end{align}
where $\mathbf{R}_{11}$ is a nonsingular upper triangular $k\times k$ matrix. The thin QR decomposition is then given by
\begin{align}
\setstackgap{L}{15pt}\def\stacktype{L}\def\sz{\scriptstyle}
\begin{aligned}
\stackunder{\mathbf{A}}{\sz (m\times n)} &=& 
\stackunder{\mathbf{Q}(:,1{:}k)}{\sz (m\times k)} &&
\stackunder{\mathbf{R}(1{:}k,:)}{\sz (k\times n)} &&
\stackunder{\mathbf{\Pi}^*.}{\sz (n\times n)}
\end{aligned}
\end{align}
To get additional insight into the structure of the decomposition, we can perform its meta-factorization. Suppose that $\mathbf{Q}$, $\mathbf{R}$ and $\mathbf{\Pi}$ are known for $\mathbf{A}$. The goal is to examine the form of the decomposition's mixing matrix $\mathbf{G}$.

Define $\mathbf{F}$ and $\mathbf{H}$ by the factors of the thin QR decomposition,
\begin{align}
\mathbf{F} = \mathbf{Q}(:,1{:}k) \quad\text{and}\quad
\mathbf{H}^* = \mathbf{R}(1{:}k,:)\mathbf{\Pi}^*.
\end{align}
Solving the projector equation yields
\begin{align}\label{eq:CPQRsemisimilarity}
\begin{aligned}
\mathbf{G} &= 
\mathbf{Q}(:,1{:}k)^*\mathbf{A}(\mathbf{R}(1{:}k,:)\mathbf{\Pi}^*)^+.
\end{aligned}
\end{align}
However, by the properties of the thin QR decomposition, 
\begin{align}
\begin{aligned}
\mathbf{Q}(:,1{:}k)^*\mathbf{A} = \mathbf{R}(1{:}k,:)\mathbf{\Pi}^*.
\end{aligned}
\end{align}
The mixing matrix for the CPQR decomposition is, therefore,
\begin{align}
\mathbf{G} = \mathbf{I}_k.
\end{align}
Notice that in the light of this result, equation (\ref{eq:CPQRsemisimilarity}) can be viewed as a special type of diagonalization or similarity-type relation.

By the properties of pseudoinverse, the final step of the meta-factorization yields the column-pivoted QR decomposition,
\begin{align}
\begin{aligned}
\mathbf{A} 
&= \mathbf{Q}(:,1{:}k)\mathbf{G}\mathbf{R}(1{:}k,:)\mathbf{\Pi}^*
\\
&= \mathbf{Q}(:,1{:}k)\mathbf{R}(1{:}k,:)\mathbf{\Pi}^*,
\end{aligned}
\end{align}
as expected.

\subsection{UTV decompositions}

We have already seen how meta-factorization describes selected factorizations, the SVD and the CPQR. This section shows how it can be applied to design matrix factorizations. The key step involved is to perform factorization of the mixing matrix $\mathbf{G}$ encoding the properties of $\mathbf{A}$. We demonstrate this approach by reconstructing the UTV decomposition and then introducing its modifications.

As Golub et al.\ and Stewart presented in \cite{golub2013matrix,stewart1998matrix}, matrix $\mathbf{A}$ can be factored into a product of two unitary matrices, $\mathbf{U}$ and $\mathbf{V}$, and an upper-triangular or lower-triangular mixing matrix $\mathbf{T}$. The resulting UTV decomposition\footnote{The UTV decomposition with upper-triangular $\mathbf{T}$ is also known as the URV decomposition, whereas the one with lower-triangular $\mathbf{T}$ is also known as the ULV decomposition.} is a~generalization of the SVD and the CPQR decomposition. The SVD is the case with diagonal $\mathbf{T}$ and the CPQR is the case with $\mathbf{V}$ being the permutation matrix. 

To reconstruct the UTV decomposition, we only need to consider the row space projector. Define the row space basis as
\begin{align}
\mathbf{H}^* = \mathbf{Q}_r(:,1{:}k)^*.
\end{align}
The projector equation yields
\begin{align}
\mathbf{X} = \mathbf{Q}_r(:,1{:}k).
\end{align}
As a result, we reach the decomposition
\begin{align}
\mathbf{G} = \mathbf{A}\mathbf{Q}_r(:,1{:}k)
\quad\text{and}\quad
\mathbf{A} = \mathbf{A}\mathbf{Q}_r(:,1{:}k)\mathbf{Q}_r(:,1{:}k)^*.
\end{align}
Extending the obtained reconstruction formula for $\mathbf{A}$ to include the remaining columns of $\mathbf{Q}_r$ yields
\begin{align}\label{eq:UTV-ground}
\begin{aligned}
\mathbf{A} 
&= \mathbf{A}\mathbf{Q}_r(:,1{:}k)\mathbf{Q}_r(:,1{:}k)^*
\\
&= 
\bmat{\mathbf{A}\mathbf{Q}_r(:,1{:}k) & \mathbf{A}\mathbf{Q}_r(:,(k{+}1){:}n)}
\begin{bmatrix}
\mathbf{Q}_r(:,1{:}k)^*
\\
\mathbf{Q}_r(:,(k{+}1){:}n)^*
\end{bmatrix}
\\
&= 
\bmat{\mathbf{G} & \mathbf{A}\mathbf{Q}_r(:,(k{+}1){:}n)}
\mathbf{Q}_r^*.
\end{aligned}
\end{align}

This is the factorization we wish to transform into the UTV decomposition. The key step is to perform factorization (meta-factorization) of the mixing matrix $\mathbf{G}$. In this case, the required factors are provided by the SVD. Namely, we get
\begin{align}\label{eq:SVDofG}
\begin{aligned}
\mathbf{G} = \mathbf{A}\mathbf{Q}_r(:,1{:}k) = \bar{\mathbf{U}}\bar{\mathbf{S}}\bar{\mathbf{V}}^*.
\end{aligned}
\end{align}
Inserting (\ref{eq:SVDofG}) into (\ref{eq:UTV-ground}) and factoring out $\bar{\mathbf{U}}$ and $\bar{\mathbf{V}}^*$, we obtain the desired UTV decomposition:
\begin{align}\label{eq:UTV}
\begin{aligned}
\mathbf{A} &= 
\bar{\mathbf{U}}
\bmat{\bar{\mathbf{S}} & \bar{\mathbf{U}}^*\mathbf{A}\mathbf{Q}_r(:,(k{+}1){:}n)}
\begin{bmatrix}
\bar{\mathbf{V}}^* & \mathbf{0}
\\
\mathbf{0} & \mathbf{I}_{n-k}
\end{bmatrix}\mathbf{Q}_r^* 
= \mathbf{U}\mathbf{T}\mathbf{V}^*,
\\
\mathbf{U} &= \bar{\mathbf{U}},
\\
\mathbf{T} &= 
\bmat{\bar{\mathbf{S}} & \bar{\mathbf{U}}^*\mathbf{A}\mathbf{Q}_r(:,(k{+}1){:}n)},
\\
\mathbf{V} &= \mathbf{Q}_r\begin{bmatrix}
\bar{\mathbf{V}} & \mathbf{0}
\\
\mathbf{0} & \mathbf{I}_{n-k}
\end{bmatrix}.
\end{aligned}
\end{align}
Notice that $\mathbf{U}$ and $\mathbf{V}$ are indeed unitary (square and orthonormal), and $\mathbf{T}$ is upper-triangular. For more details on computing the UTV decomposition see Martinsson et al.~\cite{martinsson2019randutv}. 

We can now turn to the more general case of factorizations with two sided projectors. Consider the following CPQR decompositions:
\begin{align}
\mathbf{A} = \mathbf{Q}_c\mathbf{R}_c\mathbf{\Pi}^*_c
\quad \text{and}\quad 
\mathbf{A}^* = \mathbf{Q}_r\mathbf{R}_r\mathbf{\Pi}^*_r.
\end{align}
Selecting $\mathbf{Q}_c$ to represent the column space and $\mathbf{Q}_r$ to represent the row space for $\mathbf{A}$, we have
\begin{align}
\mathbf{F} = \mathbf{Q}_c(:,1{:}k) \quad\text{and}\quad
\mathbf{H} = \mathbf{Q}_r(:,1{:}k),
\end{align}
and solving the projector equation, we get 
\begin{align}
\mathbf{Y}^* = \mathbf{Q}_c(:,1{:}k)^* 
\quad\text{and}\quad
\mathbf{X} = \mathbf{Q}_r(:,1{:}k).
\end{align}
From that, we immediately obtain the desired meta-factorization equations,
\begin{align}\label{eq:CPQR-family}
\begin{aligned}
\mathbf{G} &= \mathbf{Q}_c(:,1{:}k)^*\mathbf{A}\mathbf{Q}_r(:,1{:}k),
\\
\mathbf{A} &= \mathbf{Q}_c(:,1{:}k)\mathbf{Q}_c(:,1{:}k)^*\mathbf{A}\mathbf{Q}_r(:,1{:}k)\mathbf{Q}_r(:,1{:}k)^*.
\end{aligned}
\end{align}

Since the SVD-defining condition (\ref{eq:svd bounds}) does not hold in this case, the mixing matrix $\mathbf{G}$ is not diagonal anymore. However, it does store enough information to reconstruct $\mathbf{A}$ in the system of coordinates given by $\mathbf{F}$ and $\mathbf{H}$. This is the observation we can exploit. Furthermore, we can drop the requirement for both matrices, $\mathbf{U}$ and $\mathbf{V}$, to be unitary. The result is the design of UTV-like factorizations.

The key step is factoring the mixing matrix. Consider again performing the SVD of $\mathbf{G}$: 
\begin{align}
\mathbf{G} = \mathbf{Q}_c(:,1{:}k)^*\mathbf{A}\mathbf{Q}_r(:,1{:}k) = \bar{\mathbf{U}}\bar{\mathbf{S}}\bar{\mathbf{V}}^*.
\end{align}
Introducing that result into (\ref{eq:UTV}) defines the following factorization:
\begin{align}\label{eq:UTV1}
\begin{aligned}
\mathbf{A} &= 
\bar{\mathbf{U}}
\bmat{\bar{\mathbf{S}} & \bar{\mathbf{U}}^*\mathbf{Q}_c(:,1{:}k)^*\mathbf{A}\mathbf{Q}_r(:,(k{+}1){:}n)}
\begin{bmatrix}
\bar{\mathbf{V}}^* & \mathbf{0}
\\
\mathbf{0} & \mathbf{I}_{n-k}
\end{bmatrix}\mathbf{Q}_r^* 
= \mathbf{U}\mathbf{T}\mathbf{V}^*,
\\
\mathbf{U} &= \mathbf{Q}_c(:,1{:}k)\bar{\mathbf{U}},
\\
\mathbf{T} &= 
\bmat{\bar{\mathbf{S}} & \bar{\mathbf{U}}^*\mathbf{Q}_c(:,1{:}k)^*\mathbf{A}\mathbf{Q}_r(:,(k{+}1){:}n)},
\\
\mathbf{V} &= \mathbf{Q}_r\begin{bmatrix}
\bar{\mathbf{V}} & \mathbf{0}
\\
\mathbf{0} & \mathbf{I}_{n-k}
\end{bmatrix}.
\end{aligned}
\end{align}
The following Matlab code illustrates the design:
\begin{verbatim}
    F       = Qc(:,1:k);        B       = F;
    H       = Qr(:,1:k);        D       = H;
    [QY,RY] = qr(B'*F,0);       [QX,RX] = qr(H'*D,0);      
    Y       = (RY\(QY'*B'))';   X       = (D/RX)*QX';
    G       = Y'*A*X;
    [Ubar,Sbar,Vbar] = svd(G);
    U       = Y*Ubar;
    T       = [Sbar,Ubar'*Y'*A*Qr(:,k+1:end)];
    V       = Qr*[Vbar,zeros(k,n-k);zeros(k,n-k)',eye(n-k)];
    Ar      = U*T*V';
\end{verbatim}
Notice that $\mathbf{U}$ is not unitary anymore. If $\mathbf{A}$ is a~full column-rank matrix, then $\mathbf{Q}_c(:,1{:}k)$ is rectangular and 
\begin{align}
\mathbf{U}\mathbf{U}^* = \mathbf{Q}_c(:,1{:}k)\mathbf{Q}_c(:,1{:}k)^*,
\end{align}
which is not an identity matrix in general.

By following the same technique of meta-factorization, we can design another UTV-like decomposition. Instead of the SVD, performing the CPQR decomposition of the mixing matrix yields
\begin{align}
\begin{aligned}
\mathbf{G} = \bar{\mathbf{Q}}\bar{\mathbf{R}}\bar{\mathbf{\Pi}}^*.
\end{aligned}
\end{align}
The desired factorization can now be computed as 
\begin{align}
\begin{aligned}
\mathbf{A} &= \mathbf{Q}_c(:,1{:}k)\bar{\mathbf{Q}}\bar{\mathbf{R}}\bar{\mathbf{\Pi}}^*\mathbf{Q}_r(:,1{:}k)^* = \mathbf{U}\mathbf{T}\mathbf{V}^*,
\\
\mathbf{U} &= \mathbf{Q}_c(:,1{:}k)\bar{\mathbf{Q}},
\\
\mathbf{T} &= \bar{\mathbf{R}},
\\
\mathbf{V} &= \mathbf{Q}_r(:,1{:}k)\bar{\mathbf{\Pi}}.
\end{aligned}
\end{align}
In this case, $\mathbf{U}$ and $\mathbf{V}$ have orthogonal columns and $\mathbf{T}$ is upper-triangular. The Matlab code illustrates the design:
\begin{verbatim}
    F       = Qc(:,1:k);      B       = F;
    H       = Qr(:,1:k);      D       = H;
    [QY,RY] = qr(B'*F,0);     [QX,RX] = qr(H'*D,0);
    Y       = (RY\(QY'*B'))'; X       = (D/RX)*QX';
    G       = Y'*A*X;
    [Qbar,Rbar,Pbar] = qr(G);
    U       = Qc(:,1:k)*Qbar;
    T       = Rbar;
    V       = Qr(:,1:k)*Pbar;
    Ar      = U*T*V';
\end{verbatim}

Similarly, to obtain the decomposition with the lower-triangular mixing matrix $\mathbf{T}$, consider the permuted LU decomposition of the mixing matrix. This gives rise to the following equality,
\begin{align}
\begin{aligned}
\mathbf{G} = \tilde{\mathbf{\Pi}}^*\tilde{\mathbf{L}}\tilde{\mathbf{U}}.
\end{aligned}
\end{align}
By following the same procedure as before, we obtain
\begin{align}
\begin{aligned}
\mathbf{A} &= \mathbf{Q}_c(:,1{:}k)\tilde{\mathbf{\Pi}}^*\tilde{\mathbf{L}}\tilde{\mathbf{U}}\mathbf{Q}_r(:,1{:}k)^* = \mathbf{U}\mathbf{T}\mathbf{V}^* ,
\\
\mathbf{U} &= \mathbf{Q}_c(:,1{:}k)\tilde{\mathbf{\Pi}}^*,
\\
\mathbf{T} &= \bar{\mathbf{L}},
\\
\mathbf{V} &= \mathbf{Q}_r(:,1{:}k)\tilde{\mathbf{U}}^*.
\end{aligned}
\end{align}
As before, the Matlab code illustrates the design:
\begin{verbatim}
    F       = Qc(:,1:k);      B       = F;
    H       = Qr(:,1:k);      D       = H;
    [QY,RY] = qr(B'*F,0);     [QX,RX] = qr(H'*D,0);
    Y       = (RY\(QY'*B'))'; X       = (D/RX)*QX';
    G       = Y'*A*X;
    [Lbar,Ubar,Pbar] = lu(G);
    U       = Qc(:,1:k)*Pbar';
    T       = Lbar;
    V       = Qr(:,1:k)*Ubar';
    Ar      = U*T*V';
\end{verbatim}
In that case, $\mathbf{T}$ is lower-triangular by design. However, only $\mathbf{U}$ has orthogonal columns. That is not the case with $\mathbf{V}$ due to the upper-triangular factor given by $\tilde{\mathbf{U}}$.

This section demonstrated that meta-factorization reconstructs known factorizations, reveals their internal structures, and allows for introducing modifications. Let us see if there is more to the application of presented concept. 

\section{Additional benefits of meta-factorization}
\label{section:Additional...}

This section takes the prospect of meta-factorization to investigate a relationship of meta-factorization with the theory of generalized matrix inverses, reconstructs the generalized Nystr\"{o}m's method for computing randomized low-rank matrix approximations, and shows the role of the CUR decomposition.

\subsection{Pseudoinverse and its formulas}

It can be noticed that meta-factorization equations resemble the first and second Penrose equation characterizing the pseudoinverse of a matrix, which suggests a relationship of meta-factorization with the theory of generalized matrix inverses. This section provides some basic observations on this matter. First, it shows that meta-factorization equations and Penrose equations are equivalent for square matrices. Second, it shows how meta-factorization can be used to derive explicit formulas for a pseudoinverse.

The Moore-Penrose pseudoinverse satisfies the two equations
\begin{align}
\begin{aligned}
\mathbf{A}\mathbf{A}^{\!+}\mathbf{A} =\mathbf{A}
\quad\text{and}\quad
\mathbf{A}^{\!+}\mathbf{A}\mathbf{A}^{\!+}\! = \mathbf{A}^{\!+}.
\end{aligned}
\end{align}
These become meta-factorization equations (under the generalized assumptions of Theorem~\ref{Theorem:idempotence}). Indeed, taking
\begin{align}
\mathbf{F} = \mathbf{A} \quad\text{and}\quad
\mathbf{H}^* = \mathbf{A}
\end{align}
immediately results in the mixing matrix:
\begin{align}
\mathbf{G} = \mathbf{A}^{\!+}\mathbf{A}\mathbf{A}^{\!+}\!  = \mathbf{A}^{\!+},
\end{align}
where $\mathbf{Y}^* = \mathbf{X} = \mathbf{A}^{\!+}$. Therefore, we also have
\begin{align}
\mathbf{A} = \mathbf{A}\mathbf{G}\mathbf{A} = \mathbf{A} \mathbf{A}^{\!+} \mathbf{A}.
\end{align}
We can now use meta-factorization to describe the internal structure of pseudoinverse.

To illustrate the general idea, let us follow James \cite{james1978generalised}, Strang and Drucker \cite{strangdrucker}, and start with the CR decomposition,
\begin{align}
\mathbf{A} = \mathbf{C}\mathbf{R}.
\end{align}
When $\mathbf{A}$ is a rank-$k$ matrix, it can be factored into the product of the $m\times k$ matrix $\mathbf{C}$, containing the first $k$ independent columns of $\mathbf{A}$, and the $k\times n$ matrix $\mathbf{R}$, containing $k$ nonzero rows of the row reduced echelon form of $\mathbf{A}$. Since both $\mathbf{C}$ and $\mathbf{R}$ have full-rank $k$, the pseudoinverse of $\mathbf{A}$ is
\begin{align}
\mathbf{A}^{\!+}= \mathbf{R}^{+}\mathbf{C}^{+} = \mathbf{R}^*
(\mathbf{R}\mathbf{R}^*)^{-1}(\mathbf{C}^*\mathbf{C})^{-1}
\mathbf{C}^*.
\end{align}
However, we also have
\begin{align}
(\mathbf{R}\mathbf{R}^*)^{-1}(\mathbf{C}^*\mathbf{C})^{-1} =
(\mathbf{C}^*\mathbf{C}\mathbf{R}\mathbf{R}^*)^{-1} =
(\mathbf{C}^*\mathbf{A}\mathbf{R}^*)^{-1}.
\end{align}
Therefore, we obtain the following formula for the pseudoinverse:
\begin{align}\label{eq:strangPI}
\mathbf{A}^{\!+}= \mathbf{R}^*(\mathbf{C}^*\mathbf{A}\mathbf{R}^*)^{-1}\mathbf{C}^*.
\end{align}

We can now derive that formula by following the steps of meta-factorization, thereby revealing the internal structure of a pseudoinverse. Selecting 
\begin{align}
\mathbf{F} = \mathbf{R}^* \quad\text{and}\quad
\mathbf{H} = \mathbf{C},
\end{align}
and solving the projector equation for $\mathbf{Y}$ and $\mathbf{X}$, yields
\begin{align}
\mathbf{Y}^* = (\mathbf{R}\mathbf{R}^*)^{-1}\mathbf{R} \quad\text{and}\quad
\mathbf{X} = \mathbf{C}(\mathbf{C}^*\mathbf{C})^{-1}.
\end{align}
This brings us to the decomposition
\begin{align}
\begin{aligned}
\mathbf{G} = 
(\mathbf{R}\mathbf{R}^*)^{-1}\mathbf{R}
\mathbf{A}^{+}
\mathbf{C}(\mathbf{C}^*\mathbf{C})^{-1}
\quad\text{and}\quad
\mathbf{A}^{\!+}= 
\mathbf{R}^*(\mathbf{R}\mathbf{R}^*)^{-1}\mathbf{R}
\mathbf{A}^{+}
\mathbf{C}(\mathbf{C}^*\mathbf{C})^{-1}\mathbf{C}^*.
\end{aligned}
\end{align}
To show that this is equivalent to (\ref{eq:strangPI}), it is enough to notice that
\begin{align}
\mathbf{A}^{\!+} = \mathbf{R}^+\mathbf{C}^+.
\end{align}
Indeed, we have
\begin{align}
\begin{aligned}
\mathbf{A}^{\!+}&= 
\mathbf{R}^*(\mathbf{R}\mathbf{R}^*)^{-1}\mathbf{R}
\mathbf{A}^{+}
\mathbf{C}(\mathbf{C}^*\mathbf{C})^{-1}\mathbf{C}^* 
\\&= 
\mathbf{R}^*(\mathbf{R}\mathbf{R}^*)^{-1}
\mathbf{R}
\mathbf{R}^{+}\mathbf{C}^{+}
\mathbf{C}
(\mathbf{C}^*\mathbf{C})^{-1}\mathbf{C}^* 
\\&= 
\mathbf{R}^*(\mathbf{R}\mathbf{R}^*)^{-1}
(\mathbf{C}^*\mathbf{C})^{-1}\mathbf{C}^* 
\\&=
\mathbf{R}^*(\mathbf{C}^*\mathbf{A}\mathbf{R}^*)^{-1}\mathbf{C}^*,
\end{aligned}
\end{align}
as required. Thus, the mixing matrix for the pseudoinverse is given by
\begin{align}
\begin{aligned}
\mathbf{G} &= (\mathbf{C}^*\mathbf{A}\mathbf{R}^*)^{-1}.
\end{aligned}
\end{align}

The following result, presented by Ben-Israel and Greville in \cite{ben2003generalized}, shows that formula (\ref{eq:strangPI}) is a~special case of the general formula derived by MacDuﬀee.

\begin{theorem}[MacDuﬀee \cite{ben2003generalized}]
If a complex $m\times n$ matrix $\mathbf{A}$ of rank $k>0$ has a full-rank factorization 
\begin{align}
\begin{aligned}
\mathbf{A} =  \mathbf{B}\mathbf{D},
\end{aligned}
\end{align}
then
\begin{align}\label{eq:macduffeePI}
\begin{aligned}
\mathbf{A}^{\!+} = \mathbf{D}^*(\mathbf{B}^*\mathbf{A}\mathbf{D}^*)^{-1}\mathbf{B}^*.
\end{aligned}
\end{align}
\end{theorem}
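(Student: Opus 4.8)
The plan is to verify directly that the matrix $\mathbf{G} = (\mathbf{B}^*\mathbf{A}\mathbf{D}^*)^{-1}$ is well-defined and that $\mathbf{D}^*\mathbf{G}\mathbf{B}^*$ satisfies the four Penrose equations, so it must equal $\mathbf{A}^{\!+}$ by uniqueness. First I would note that, since $\mathbf{A} = \mathbf{B}\mathbf{D}$ is a full-rank factorization, $\mathbf{B}$ is $m\times k$ of rank $k$ and $\mathbf{D}$ is $k\times n$ of rank $k$; hence $\mathbf{B}^*\mathbf{B}$ and $\mathbf{D}\mathbf{D}^*$ are invertible $k\times k$ matrices, and $\mathbf{B}^+ = (\mathbf{B}^*\mathbf{B})^{-1}\mathbf{B}^*$, $\mathbf{D}^+ = \mathbf{D}^*(\mathbf{D}\mathbf{D}^*)^{-1}$ are genuine left/right inverses: $\mathbf{B}^+\mathbf{B} = \mathbf{I}_k = \mathbf{D}\mathbf{D}^+$. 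It follows that $\mathbf{B}^*\mathbf{A}\mathbf{D}^* = \mathbf{B}^*\mathbf{B}\mathbf{D}\mathbf{D}^*$ is a product of two invertible $k\times k$ matrices, hence invertible, and
\begin{align}
(\mathbf{B}^*\mathbf{A}\mathbf{D}^*)^{-1} = (\mathbf{D}\mathbf{D}^*)^{-1}(\mathbf{B}^*\mathbf{B})^{-1}.
\end{align}

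Next I would substitute this identity into the claimed formula to get $\mathbf{D}^*(\mathbf{B}^*\mathbf{A}\mathbf{D}^*)^{-1}\mathbf{B}^* = \mathbf{D}^*(\mathbf{D}\mathbf{D}^*)^{-1}(\mathbf{B}^*\mathbf{B})^{-1}\mathbf{B}^* = \mathbf{D}^+\mathbf{B}^+$, reducing the claim to the well-known fact $\mathbf{A}^{\!+} = (\mathbf{B}\mathbf{D})^+ = \mathbf{D}^+\mathbf{B}^+$ for full-rank factorizations. To make the argument self-contained I would verify the four Penrose equations for $\mathbf{W} := \mathbf{D}^+\mathbf{B}^+$ directly, using $\mathbf{B}^+\mathbf{B} = \mathbf{I}_k = \mathbf{D}\mathbf{D}^+$ throughout: $\mathbf{A}\mathbf{W}\mathbf{A} = \mathbf{B}\mathbf{D}\mathbf{D}^+\mathbf{B}^+\mathbf{B}\mathbf{D} = \mathbf{B}\mathbf{D} = \mathbf{A}$; $\mathbf{W}\mathbf{A}\mathbf{W} = \mathbf{D}^+\mathbf{B}^+\mathbf{B}\mathbf{D}\mathbf{D}^+\mathbf{B}^+ = \mathbf{D}^+\mathbf{B}^+ = \mathbf{W}$; $\mathbf{A}\mathbf{W} = \mathbf{B}\mathbf{D}\mathbf{D}^+\mathbf{B}^+ = \mathbf{B}\mathbf{B}^+$, which is Hermitian since $\mathbf{B}\mathbf{B}^+ = \mathbf{B}(\mathbf{B}^*\mathbf{B})^{-1}\mathbf{B}^*$; and $\mathbf{W}\mathbf{A} = \mathbf{D}^+\mathbf{B}^+\mathbf{B}\mathbf{D} = \mathbf{D}^+\mathbf{D} = \mathbf{D}^*(\mathbf{D}\mathbf{D}^*)^{-1}\mathbf{D}$, also Hermitian. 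By the uniqueness of the Moore--Penrose pseudoinverse, $\mathbf{W} = \mathbf{A}^{\!+}$, which establishes (\ref{eq:macduffeePI}).

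Alternatively, and more in the spirit of the paper, I would derive the formula as an instance of the meta-factorization machinery: take $\mathbf{F} = \mathbf{D}^*$ and $\mathbf{H} = \mathbf{B}$ (so that $\mathbf{F}$ spans $\mathcal{C}(\mathbf{A}^*)$ and $\mathbf{H}$ spans $\mathcal{C}(\mathbf{A})$, which are the column and row spaces of $\mathbf{A}^{\!+}$), and solve the projector equation via Theorem~\ref{Theorem:projector-eq-sol} with $\mathbf{B}_{\mathrm{aux}} = \mathbf{D}^*$, $\mathbf{D}_{\mathrm{aux}} = \mathbf{B}$ to obtain $\mathbf{Y}^* = (\mathbf{D}\mathbf{D}^*)^{-1}\mathbf{D}$ and $\mathbf{X} = \mathbf{B}(\mathbf{B}^*\mathbf{B})^{-1}$. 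Then the mixing matrix for $\mathbf{A}^{\!+}$ is $\mathbf{G} = \mathbf{Y}^*\mathbf{A}^{\!+}\mathbf{X}$, and replaying the computation already done in the excerpt for the CR decomposition (with $\mathbf{R}^* \leftrightarrow \mathbf{D}^*$, $\mathbf{C} \leftrightarrow \mathbf{B}$) collapses $\mathbf{A}^{\!+} = \mathbf{D}^*(\mathbf{D}\mathbf{D}^*)^{-1}\mathbf{D}\,\mathbf{A}^{\!+}\,\mathbf{B}(\mathbf{B}^*\mathbf{B})^{-1}\mathbf{B}^*$ to $\mathbf{D}^+\mathbf{B}^+ = \mathbf{D}^*(\mathbf{B}^*\mathbf{A}\mathbf{D}^*)^{-1}\mathbf{B}^*$.

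The main obstacle is not any single hard step but making sure the role of the full-rank hypothesis is used cleanly: it is what guarantees $\mathbf{B}^*\mathbf{B}$, $\mathbf{D}\mathbf{D}^*$, and hence $\mathbf{B}^*\mathbf{A}\mathbf{D}^*$ are invertible, and it is what forces $\mathbf{A}^{\!+} = \mathbf{D}^+\mathbf{B}^+$ (this reverse-order identity fails for general products). I would therefore state explicitly at the outset that $\operatorname{rank}(\mathbf{B}) = \operatorname{rank}(\mathbf{D}) = k$ and that the two small Gram matrices are invertible, and keep the rest as the short algebraic verification above.
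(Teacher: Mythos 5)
Your proposal is correct, and your primary argument differs from the paper's. The paper derives MacDuffee's formula as an application of its own meta-factorization machinery: it sets $\mathbf{F}=\mathbf{D}^*$, $\mathbf{H}^*=\mathbf{B}^*$, invokes Theorem~\ref{Theorem:projector-eq-sol} to obtain $\mathbf{Y}^*=(\mathbf{D}^*)^+$, $\mathbf{X}=(\mathbf{B}^*)^+$, and reads off the mixing matrix $\mathbf{G}=(\mathbf{D}^*)^+\mathbf{A}^{\!+}(\mathbf{B}^*)^+=(\mathbf{B}^*\mathbf{A}\mathbf{D}^*)^+$, which is nonsingular. Your main proof instead reduces the claim to $\mathbf{A}^{\!+}=\mathbf{D}^+\mathbf{B}^+$ via the identity $(\mathbf{B}^*\mathbf{A}\mathbf{D}^*)^{-1}=(\mathbf{D}\mathbf{D}^*)^{-1}(\mathbf{B}^*\mathbf{B})^{-1}$ and then verifies the four Penrose axioms for $\mathbf{W}=\mathbf{D}^+\mathbf{B}^+$ directly. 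This is a more elementary and self-contained route; it does not appeal to Theorems~\ref{Theorem:projector-eq-sol} or~\ref{Theorem:meta-factorization}, and it does not presuppose the reverse-order identity for pseudoinverses, which the paper's one-line simplification $(\mathbf{D}^*)^+\mathbf{A}^{\!+}(\mathbf{B}^*)^+=(\mathbf{B}^*\mathbf{A}\mathbf{D}^*)^+$ implicitly relies on (and which, in fact, needs $\mathbf{A}^{\!+}=\mathbf{D}^+\mathbf{B}^+$ or an equivalent computation to justify). What the paper's derivation buys instead is conceptual: it exhibits MacDuffee's formula as an instance of the meta-factorization equations, which is precisely the narrative point of that section. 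Your ``alternative'' sketch, with $\mathbf{F}=\mathbf{D}^*$, $\mathbf{H}=\mathbf{B}$, $\mathbf{Y}^*=(\mathbf{D}\mathbf{D}^*)^{-1}\mathbf{D}=(\mathbf{D}^*)^+$, $\mathbf{X}=\mathbf{B}(\mathbf{B}^*\mathbf{B})^{-1}=(\mathbf{B}^*)^+$, is essentially identical to the paper's route. Both arguments are sound; your isolating the full-rank hypothesis as what makes $\mathbf{B}^*\mathbf{B}$, $\mathbf{D}\mathbf{D}^*$, and hence $\mathbf{B}^*\mathbf{A}\mathbf{D}^*$ invertible, and what validates $\mathbf{A}^{\!+}=\mathbf{D}^+\mathbf{B}^+$, is exactly the right emphasis.
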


Let us derive the formula by following the steps of meta-factorization.
By Theorem~\ref{Theorem:projector-eq-sol}, orthogonal projections corresponding to 
\begin{align}
\mathbf{F} = \mathbf{D}^*
\quad\text{and}\quad 
\mathbf{H}^* = \mathbf{B}^*
\end{align}
are given by 
\begin{align}
\mathbf{Y}^*=(\mathbf{D^*})^+
\quad\text{and}\quad 
\mathbf{X}=(\mathbf{B}^*)^+.
\end{align}
Therefore, the mixing matrix for a pseudoinverse is
\begin{align}
\mathbf{G} = (\mathbf{D}^*)^+\mathbf{A}^{\!+}(\mathbf{B}^*)^+ = 
(\mathbf{B}^*\mathbf{A}\mathbf{D}^*)^+.
\end{align}
Since $\mathbf{B}^*\mathbf{A}\mathbf{D}^*$ is nonsingular as a product of nonsingular matrices, we have
\begin{align}
\mathbf{A}^{\!+} &= \mathbf{D}^*(\mathbf{B}^*\mathbf{A}\mathbf{D}^*)^{-1}\mathbf{B}^*,
\end{align}
giving the desired explicit formula for a pseudoinverse.

\subsection{Generalized Nystr\"{o}m's method, the CUR and the outer-product decomposition}

Nakatsukasa \cite{nakatsukasa2020fast} showed that many algorithms of randomized linear algebra can be written in the form of the generalized Nystr\"{o}m's method. Matrix $\mathbf{A}$ can be approximated by matrix $\mathbf{A}_r$ being a two-sided projection defined by two sampling (sketch) matrices, an $n\times k$ matrix $\mathbf{\Omega}_c$ and $m\times k$ matrix $\mathbf{\Omega}_r$:
\begin{align}
\begin{aligned}
\mathbf{A}_r &= \mathbf{A}\mathbf{\Omega}_c(\mathbf{\Omega}_r^*\mathbf{A}\mathbf{\Omega}_c)^{+}\mathbf{\Omega}_r^*\mathbf{A}.
\end{aligned}
\end{align}
There are two projection matrices involved. The first one,
\begin{align}
\begin{aligned}
\mathbf{P} &= \mathbf{A}\mathbf{\Omega}_c(\mathbf{\Omega}_r^*\mathbf{A}\mathbf{\Omega}_c)^{+}\mathbf{\Omega}_r^*,
\end{aligned}
\end{align}
defines an oblique projection onto the column space of $\mathbf{A}\mathbf{\Omega}_c$. The second, 
\begin{align}
\begin{aligned}
\mathbf{R} &= \mathbf{\Omega}_c(\mathbf{\Omega}_r^*\mathbf{A}\mathbf{\Omega}_c)^{+}\mathbf{\Omega}_r^*\mathbf{A},
\end{aligned}
\end{align}
defines an oblique projection onto the row space of $\mathbf{\Omega}_r^*\mathbf{A}$.

The generalized Nystr\"{o}m's method can be derived from the meta-factorization procedure. The goal is to design the oblique projectors. By Theorem~\ref{Theorem:projector-eq-sol}, 
\begin{align}
\mathbf{F} = \mathbf{A}\mathbf{\Omega}_c
\quad&\text{and}\quad 
\mathbf{H}^* = \mathbf{\Omega}_r^*\mathbf{A}
\\\label{eq:Nystroem-B-D}
\mathbf{B} = \mathbf{\Omega}_r
\quad&\text{and}\quad 
\mathbf{D} = \mathbf{\Omega}_c
\end{align}
yield the following solutions to the projector equation:
\begin{align}
\mathbf{Y}^*= (\mathbf{\Omega}_r^*\mathbf{A}\mathbf{\Omega}_c)^+\mathbf{\Omega}_r^*
\quad\text{and}\quad 
\mathbf{X}=\mathbf{\Omega}_c(\mathbf{\Omega}_r^*\mathbf{A}\mathbf{\Omega}_c)^+.
\end{align}
As a result, the mixing matrix becomes:
\begin{align}\label{eq:GN-G}
\begin{aligned}
\mathbf{G} 
&= (\mathbf{\Omega}_r^*\mathbf{A}\mathbf{\Omega}_c)^{+}\mathbf{\Omega}_r^*\mathbf{A}\mathbf{\Omega}_c(\mathbf{\Omega}_r^*\mathbf{A}\mathbf{\Omega}_c)^{+}
= (\mathbf{\Omega}_r^*\mathbf{A}\mathbf{\Omega}_c)^{+},
\end{aligned}
\end{align}
which gives rise to the generalized Nystr\"{o}m's method:
\begin{align}
\begin{aligned}
\mathbf{A}_r 
&= \mathbf{F}\mathbf{G}\mathbf{H}^* 
\\
&= \mathbf{A}\mathbf{\Omega}_c(\mathbf{\Omega}_r^*\mathbf{A}\mathbf{\Omega}_c)^{+}\mathbf{\Omega}_r^*\mathbf{A}.
\end{aligned}
\end{align}
As pointed out by Nakatsukasa \cite{nakatsukasa2020fast}, the preferred way to compute $\mathbf{A}_r$ is to perform QR factorization $\mathbf{\Omega}_r^*\mathbf{A}\mathbf{\Omega}_c = \mathbf{Q}\mathbf{R}$ and then take
\begin{align}
\begin{aligned}
\mathbf{A}_r 
&= (\mathbf{A}\mathbf{\Omega}_c\mathbf{R}^{-1})(\mathbf{Q}^*\mathbf{\Omega}_r^*\mathbf{A}),
\end{aligned}
\end{align}
which completes the third step of the meta-factorization procedure. The approximation error can be substantially reduced with the row-space oversampling. The Matlab code below illustrates the idea,
\begin{verbatim}
    D       = randn(n,k);   B  = randn(m,ceil(2*k)); % row-space oversampling
    F       = A*D;          H  = A'*B;
    [Q,R]   = qr(B'*A*D,0);
    Ar      = (F/R)*(Q'*H');
\end{verbatim}

The generalized Nystr\"{o}m's method is closely related to the outer-product decomposition introduced by Wedderburn \cite{wedderburn1934lectures}, Egerv{\'a}ry \cite{egervary1960rank}, Householder \cite{householder1965theory}, Cline and Funderlic \cite{cline1979rank}, and Chu et al. \cite{chu1995rank}. Suppose that $\mathbf{F}$, $\mathbf{G}$ and $\mathbf{H}$ are given. Then, 
\begin{align}
\begin{aligned}
\mathrm{rank}(\mathbf{A} - \mathbf{F}\mathbf{G}\mathbf{H}^*) = 
\mathrm{rank}(\mathbf{A})-\mathrm{rank}(\mathbf{F}\mathbf{G}\mathbf{H}^*)
\end{aligned}
\end{align}
if and only if there exist matrices $\mathbf{\Omega}_c$ and $\mathbf{\Omega}_r$ such that
\begin{align}\label{eq:rank-reduction-conds}
\begin{aligned}
\mathbf{F} = \mathbf{A}\mathbf{\Omega}_c
\quad\text{and}\quad
\mathbf{G}^{-1} = \mathbf{\Omega}_r^*\mathbf{A}\mathbf{\Omega}_c
\quad\text{and}\quad
\mathbf{H} = \mathbf{A}^*\mathbf{\Omega}_r.
\end{aligned}
\end{align}
Therefore, when the conditions above hold, there exists a~finite rank-reducing process which decomposes $\mathbf{A}$ into a~sum of low-rank matrices. The result is the outer-product decomposition. To be more specific, consider the Wedderburn rank-one reduction process,
\begin{align}
\begin{aligned}
\mathbf{A}_1 &= \mathbf{A},
\\
\mathbf{A}_{r+1} &= \mathbf{A}_r - 
g_r^{-1}\mathbf{A}_r\mathbf{u}_r\mathbf{v}^*_r\mathbf{A}_r,
\\
g_r &= \mathbf{v}^*_r\mathbf{A}_r\mathbf{u}_r \neq 0
\quad\text{for}\quad r = 1,\dots,k-1.
\end{aligned}
\end{align}
The process terminates after $k$ iterations for any rank-$k$ matrix and gives rise to the factorization of the following form:
\begin{align}
\begin{aligned}
\mathbf{A} &= \mathbf{F}\mathbf{G}\mathbf{H}^*
= \bmat{\mathbf{A}_1\mathbf{u}_1,\dots,\mathbf{A}_k\mathbf{u}_k}
\bmat{g_1 & \\ & \ddots & \\ & & g_k}^{-1}
\bmat{\mathbf{v}^*_1\mathbf{A}_1\\\dots\\\mathbf{v}^*_k\mathbf{A}_k}
= \mathbf{A}\mathbf{\Omega}_c(\mathbf{\Omega}_r^*\mathbf{A}\mathbf{\Omega}_c)^{-1}\mathbf{\Omega}_r^*\mathbf{A}.
\end{aligned}
\end{align}

We have reached the following conclusion. The generalized Nystr\"{o}m's method produces high-quality low-rank approximations of $\mathbf{A}$ by exploiting the structure of the outer-product factorization and relaxing the rank-reduction conditions, defined by (\ref{eq:rank-reduction-conds}), with random sketch matrices $\mathbf{\Omega}_c$ and $\mathbf{\Omega}_r$. In both cases, the factors of the decompositions satisfy the equations of meta-factorization.

We close the section with one final observation. It regards the relation between the generalized Nystr\"{o}m's method and the CUR factorization. Recall that the CUR factorization provides an approximation:
\begin{align}\label{eq:CUR}
\mathbf{A} \approx \mathbf{C}\mathbf{U}\mathbf{R},
\end{align}
where $\mathbf{C} = \mathbf{A}(:,J)$ is a subset of representative columns of $\mathbf{A}$, $\mathbf{R} = \mathbf{A}(I,:)$ is a subset of representative rows of $\mathbf{A}$, and the mixing matrix is $\mathbf{U}$. The properties of the approximation depend critically on the rank of $\mathbf{U}$ and the selection of columns and rows, $J$ and $I$, as demonstrated by Sorensen and Embree \cite{sorensen2016deim}, Wang and Zhang \cite{wang2013improving}, Martinsson and Tropp \cite{martinsson2019randomized}, Hamm and Huang \cite{hamm2020perspectives}. 

To reconstruct the CUR decomposition from the meta-factorization equations, consider the following special case of sampling matrices, 
\begin{align}
\bar{\mathbf{\Omega}}_c = \mathbf{I}_{n}(:,J)
\quad\text{and}\quad
\bar{\mathbf{\Omega}}_r = \mathbf{I}_{m}(:,I).  
\end{align}
Matrix
\begin{align}
\mathbf{F} = \mathbf{A}\bar{\mathbf{\Omega}}_c = \mathbf{C}
\end{align}
contains the original columns of $\mathbf{A}$. Similarly, matrix 
\begin{align}
\mathbf{H}^* = \bar{\mathbf{\Omega}}_r^*\mathbf{A} = \mathbf{R}
\end{align}
contains the original rows of $\mathbf{A}$. For a~feasible matrix $\mathbf{B}$, we then obtain 
\begin{align}
\mathbf{Y}^*= (\mathbf{B}^*\mathbf{A}\bar{\mathbf{\Omega}}_c)^+\mathbf{B}^*,
\end{align}
and for a~feasible matrix $\mathbf{D}$, we obtain
\begin{align}
\mathbf{X} = \mathbf{D}(\bar{\mathbf{\Omega}}_r^*\mathbf{A}\mathbf{D})^+.
\end{align}
By the rules of meta-factorization, the corresponding mixing matrix is
\begin{align}\label{eq:CUR-G}
\begin{aligned}
\mathbf{G} &= 
(\mathbf{B}^*\mathbf{A}\mathbf{\Omega}_c)^+\mathbf{B}^*
\mathbf{A}
\mathbf{D}(\mathbf{\Omega}_r^*\mathbf{A}\mathbf{D})^+
\\
&= 
(\mathbf{B}^*\mathbf{C})^+\mathbf{B}^*
\mathbf{A}
\mathbf{D}(\mathbf{R}\mathbf{D})^+.
\end{aligned}
\end{align}
Selecting 
\begin{align}\label{eq:CUR-B-D}
\mathbf{B} = \mathbf{A}\bar{\mathbf{\Omega}}_c = \mathbf{C}
\quad&\text{and}\quad 
\mathbf{D} = \mathbf{A}^*\bar{\mathbf{\Omega}}_r = \mathbf{R}^*
\end{align}
immediately leads to the following well-known form of the CUR decomposition,
\begin{align}
\begin{aligned}
\mathbf{A} &= 
\mathbf{C}(\mathbf{C}^*\mathbf{C})^+\mathbf{C}^*
\mathbf{A}
\mathbf{R}^*(\mathbf{R}\mathbf{R}^*)^+\mathbf{R}
\\
&=
\mathbf{C}\mathbf{C}^+
\mathbf{A}
\mathbf{R}^+\mathbf{R}.
\end{aligned}
\end{align}
We have reconstructed the CUR decomposition from the meta-factorization equations.

We can now explain the relation between the CUR factorization and generalized Nystr\"{o}m's method. It is defined by the structure of the hidden projectors. The projectors defined by (\ref{eq:CUR-B-D}) give rise to the CUR decomposition, whereas the projectors defined by  (\ref{eq:Nystroem-B-D}) give rise to the generalized Nystr\"{o}m's method. 

The following Matlab code illustrates the concepts for a~random (naive) selection of columns and rows:
\begin{verbatim}
    In      = eye(n);       Im      = eye(m);
    I       = randperm(m,k);J       = randperm(n,k);
    Q       = In(:,J);      P       = Im(:,I);
    F       = A*Q;          H       = A'*P;
    B       = P;            D       = Q;              % Nystrom-like method, or
    B       = F;            D       = H;              % CUR (with naive sampling)
    [QY,RY] = qr(B'*F,0);   [QX,RX] = qr(H'*D,0);     % If rank(B'*F) = rank(H'*D) = k,
    Y       = (RY\(QY'*B'))';   X   = (D/RX)*QX';     %  calculate Y and X.
    Y       = (pinv(B'*F)*B')'; X   = D*pinv(H'*D);   % Otherwise, consider using pinv()
    G       = Y'*A*X;
    Ar      = F*G*H';
\end{verbatim}

The examples of this section provide valuable insights into the world of matrix factorizations and reveal relations between well-known matrix structures and randomized algorithms. 

\section{Summary}

The concept of meta-factorization develops a perspective in which matrix factorizations become solutions of linear matrix equations. This perspective leads to exciting insights when applied in the study of matrix algorithms. It allows for reconstructing known factorizations, investigating their internal structures, and introducing modifications on demand. Furthermore, once the prospect of meta-factorization is considered, it is possible to notice analogies between theories previously hidden from view. That is how we have gained insights investigating the generalized matrix inverses and randomized linear algebra algorithms. Also, there is much more to be studied about that idea and further twists are yet to be discovered.

\section*{Acknowledgment}

This paper was written in the tumultuous times of the COVID-19 pandemic, where many plans had to be changed. Nevertheless, I would like to thank Prof. Gilbert Strang for his enthusiasm and helpful remarks during our remote discussions that covered my postponed stay at MIT. I also thank Prof. Michael Saunders for his support and review of the manuscript, Prof. Yuji Nakatsukasa for helpful discussions, and Prof. Alex Townsend for important references. Finally, this paper would not have been written without the support of my wife, Dr. Inez Okulska, who helped me turn a jungle of thoughts into a paper that hopefully reads well despite its terrifying amount of equations.

\appendix
\section{Side note on generalizations}
\label{section:Side note...}

The projector equation plays one of the leading roles in the developed theory of meta-factorization. Therefore, it is tempting to ask if it can be generalized and what would be the outcomes of such a generalization? One potential answer is included here, possibly for future investigation. 

Consider a~process of recursive meta-factorization defined by a composition of meta-factorization equations. For the fixed column space and row space bases, $\mathbf{F}$ and $\mathbf{H}^*$, the reconstruction formula for $\mathbf{A}$ becomes
\begin{align}
\begin{aligned}
\mathbf{A} 
&= (\mathbf{F}\mathbf{Y}^*)(\mathbf{F}\mathbf{Y}^*)\cdots(\mathbf{F}
\mathbf{Y}^*)\mathbf{A}(\mathbf{X}
\mathbf{H}^*)\cdots(\mathbf{X}\mathbf{H}^*)(\mathbf{X}\mathbf{H}^*)
\\
&= (\mathbf{F}\mathbf{Y}^*)^N\mathbf{A}(\mathbf{X}\mathbf{H}^*)^N
\\
&= \mathbf{F}(\mathbf{Y}^*\mathbf{F})^{N-1}
\mathbf{Y}^*\mathbf{A}\mathbf{X}
(\mathbf{H}^*\mathbf{X})^{N-1}\mathbf{H}^*.
\end{aligned}
\end{align}
Suppose that we generalize the projector equation, so that we have:
\begin{align}
\begin{aligned}
\mathbf{Y}^*\mathbf{F} = \mathbf{Z}_c
\quad\text{and}\quad
\mathbf{H}^*\mathbf{X} = \mathbf{Z}_r
\quad\text{and}\quad
\mathbf{Z}_c^N = \mathbf{Z}_r^N = \mathbf{I}_k.
\end{aligned}
\end{align}
%
Observe now that, given rank-$k$ matrices $\mathbf{F}$ and $\mathbf{H}^*$, the generalized projector equations admit the following solutions:
\begin{align}
\begin{aligned}
\mathbf{Y}^* = \mathbf{Z}_c\mathbf{F}^+
\quad\text{and}\quad
\mathbf{X} = (\mathbf{H}^*)^+\mathbf{Z}_r.
\end{aligned}
\end{align}
We can now see that as a result we get:
\begin{align}
\begin{aligned}
(\mathbf{F}\mathbf{Y}^*)^{N} = 
\mathbf{F}\mathbf{Z}_c^{N-1}\mathbf{Y}^* =
\mathbf{F}\mathbf{Z}_c^{N}\mathbf{F}^+ =
\mathbf{F}\mathbf{F}^+
\end{aligned}
\end{align}
and:
\begin{align}
\begin{aligned}
(\mathbf{X}\mathbf{H}^*)^{N} = 
\mathbf{X}\mathbf{Z}_r^{N-1}\mathbf{H}^* =
(\mathbf{H}^*)^+\mathbf{Z}_r^{N}\mathbf{H}^*=
(\mathbf{H}^*)^+\mathbf{H}^*.
\end{aligned}
\end{align}
In other words, for $p = 0, 1, 2, \dots$ we obtain:
\begin{align}
\begin{aligned}
\mathbf{A} = 
(\mathbf{F}\mathbf{Y}^*)^{Np}\mathbf{A}(\mathbf{X}\mathbf{H}^*)^{Np} =
\mathbf{F}\mathbf{F}^+\mathbf{A}(\mathbf{H}^*)^+\mathbf{H}^*.
\end{aligned}
\end{align}
Therefore, any matrix factorization:
\begin{align}
\begin{aligned}
\mathbf{A} = \mathbf{F}\mathbf{G}\mathbf{H}^*
\end{aligned}
\end{align}
becomes periodic (with period $N$), when:
\begin{align}
\begin{aligned}
\mathbf{G} = \mathbf{Z}_c\mathbf{F}^+\mathbf{A}(\mathbf{H}^*)^+\mathbf{Z}_r
\quad\text{and}\quad
\mathbf{Z}_c^N = \mathbf{Z}_r^N = \mathbf{I}_k.
\end{aligned}
\end{align}
This is one of the potential spots to investigate the internal structure of new factorizations\footnote{For more details and numerical demonstration see: \url{https://github.com/mkarpowi/mft}}. Meta-factorization provides hints on the structure and offers the potential of creating new ones.

\bibliographystyle{plain}  

\bibliography{references}

\end{document}